\newtheorem{theorem}{Theorem}
\newtheorem{corollary}{Corollary}
\newtheorem{lemma}{Lemma}
\newtheorem{remark}{Remark}
\def\bo{\mathring{\beta}}
\def\b*{\mathring{\beta}}
\def\Rp{\mathbb{R}^p}
\def\R{\mathbb{R}}
\def\dt{\delta_{t-1}}
\def\bmin{\mathring{\beta}_{min}}
\def\hbj{\hat{\beta} _J}
\def\tbj{\tilde{\beta} _J}
\def\Ex{\mathbb E}
\def\ml{\mathring{\ell}}
\def\Ms{r^*}
\def\cc{\kappa_{a_1}}
\def\hbeta{\hat{\beta}}
\title{Improving Lasso for Model Selection and Prediction}
\author{Piotr Pokarowski \footnote{Institute of Applied Mathematics and Mechanics,
University of Warsaw, Banacha 2, 02-097 Warsaw, Poland, pokar@mimuw.edu.pl. The research was supported by the Polish
National Science Center grant no. 2015/17/B/ST6/01878.}
,Wojciech Rejchel 
\footnote{Faculty of Mathematics and Computer Science,
Nicolaus Copernicus University,  Chopina 12/18, 87-100, Toru\'n, Poland, 
wrejchel@gmail.com. The research was supported by the Polish
National Science Center grant no. 2015/17/B/ST6/01878} 
, Agnieszka So\l tys \footnote{Institute of Applied Mathematics and Mechanics,
University of Warsaw, Banacha 2, 02-097 Warsaw, Poland, a.prochenka@phd.ipipan.waw.pl}, \\
Micha\l \: Frej\footnote{Institute of Applied Mathematics and Mechanics,
University of Warsaw, Banacha 2, 02-097 Warsaw, Poland, m.frej@mimuw.edu.pl}, 
Jan Mielniczuk \footnote{Institute of Computer Sciences, Polish Academy of Sciences, Jana Kazimierza 5, 01-248 Warsaw, Poland, miel@ipipan.waw.pl}}
\date{}
\begin{document}

\maketitle


\begin{abstract}
It is known that the Thresholded Lasso (TL), SCAD or MCP correct intrinsic estimation bias of
the Lasso. In this paper we propose an alternative method of improving the Lasso for
predictive models with general convex loss functions which encompass normal linear
models, logistic regression, quantile regression or support vector machines. For a given
penalty we order the absolute values of the Lasso non-zero coefficients and then select the
final model from a small nested family by the Generalized Information Criterion.
We derive exponential upper bounds on the selection error of the method. These results
confirm that, at least for normal linear models, our algorithm seems to be the benchmark for
the theory of model selection as it is constructive, computationally efficient and leads to
consistent model selection under weak assumptions. Constructivity of the algorithm means
that, in contrast to the TL, SCAD or MCP, consistent selection does not rely on the unknown
parameters as the cone invertibility factor. Instead, our algorithm only needs the sample
size, the number of predictors and an upper bound on the noise parameter.
We show in numerical experiments on synthetic and real-world data sets that an
implementation of our algorithm is more accurate than implementations of studied concave
regularizations. Our procedure is contained in the R package ,,DMRnet'' and available on the
CRAN repository.
\end{abstract}

List of keywords: convex loss function, empirical process, generalized information criterion, high-dimensional regression,  penalized estimation, selection consistency

\section{Introduction}
Sparse high-dimensional predictive models, where the number of true predictors $t$ is significantly smaller than the sample size $n$ 
and the number of all predictors $p$ greatly exceeds $n,$ have been a focus of research in statistical machine learning in recent years. 
The Lasso algorithm, that is the minimum loss method regularized by sparsity inducing the $\ell_1$ penalty, is the main tool of fitting 
such models \citep{Tibshirani11,BuhlmannGeer11}. However,  it has been shown that the model selected by the Lasso is usually 
too large  and  that for asymptotically consistent model selection it requires the {\it irrepresentable condition} on an experimental matrix 
\citep{MeinshausenBuhlmann06,ZhaoYu06,ShenEtAl12} which is too restrictive in general. The model's dimension can be reduced without loss of the quality 
using the Thresholded Lasso (TL) algorithm, which selects variables with largest absolute values of the Lasso coefficients \citep{YeZhang10,Zhou09} 
or by solving  more computationally demanding minimization of a loss with a folded concave penalty (FCP) as SCAD 
\citep{FanLi01}, MCP \citep{ZhangCH10} or capped $l_1$-penalty \citep{ZhangT10,ShenEtAl12}.  TL, FCP and similar methods lead to consistent selection 
under weaker assumptions such as the {\it restricted isometry property} 
\citep{ZhangCH10,ZhangT10,ZhangZhang12,ShenEtAl12,WangEtAl13,FanEtAl14,WangEtAl14}.  In \cite{PokarowskiMielniczuk15} one introduced an algorithm 
called {\it Screening--Ordering--Selection} (SOS)  for linear model selection, which reduces successfully the model selected by the Lasso. 
SOS is based on the variant of TL proposed by \cite{Zhou09} and leads to consistent model selection under assumptions similar to the restricted isometry property.

In the paper we consider two algorithms that improve the Lasso, i.e. they are model selection consistent under weaker assumptions than the Lasso. 
Moreover, the considered procedures are computationally simpler than FCP methods that use non-convex penalties. The first algorithm is the well-known Thresholded Lasso. 
Its model selection consistency in the normal linear model is proven in \cite[Theorem 8]{YeZhang10} provided that conditions, which seem to be ,,minimal'', 
are satisfied. Our first contribution is an extension of this result (given in Theorem \ref{th1}) to Generalized Linear Models (GLM).  
However, the TL algorithm is not constructive, because it is not known how to choose the threshold. 
Therefore, in the current paper we propose the second improvement of the Lasso which is the {\it Screening--Selection} (SS) algorithm. 
It is a two-step procedure: in the first step (screening) one computes the Lasso estimator 
$\hat{\beta}$ with penalty $\lambda$ and orders its nonzero coefficients according to their decreasing absolute values. 
In the second step (selection)  one chooses the model which minimizes the Generalized Information Criterion  (GIC)  with penalty $\lambda^2/2$ in a nested family induced 
by the ordering.  Thus, the SS algorithm (Algorithm~\ref{alg:SS} below) can be viewed as the Lasso with adaptive thresholding based on GIC. 
We prove that this procedure is model selection consistent in a wide class of models containing linear models with the subgaussian noise (Theorem \ref{th_LM}), 
GLM (Theorem \ref{th2}) and models with convex (possibly nondifferentiable) loss functions (Theorem \ref{main_emp})  as in quantile regression or support vector machines. 
The obtained results are   exponential upper bounds 
on the selection error of SS in terms of $\lambda$, which parallel  the known bounds for TL  \citep[Theorem~8]{YeZhang10}  
or FCP \citep[Corollary~3 and 5]{FanEtAl14}. For GLM our results are obtained on the basis of exponential inequalities for subgaussian random variables.  
In the case of predictive models with  general convex loss functions we use methods from the empirical process theory. 

The SS algorithm is a simplification and a generalization of the {\it Screening--Ordering--Selection} (SOS) algorithm from \cite{PokarowskiMielniczuk15}
that was proposed only for normal linear models. We show that the ordering step in SOS can be done using  separability of the Lasso instead of using  $t$-statistics. 
Separability means that, under mild assumptions, coordinates of the Lasso corresponding to relevant predictors are larger in absolute values than those corresponding 
to irrelevant predictors. Moreover, we establish that the SS algorithm is model selection consistent beyond normal linear models. 
The new procedure can be applied to the various statistical predictive models including models with quantitative or qualitative response variables. 
We can use ''classical'' models (the normal linear model, the logistic model) as well as modern problems involving piecewise-linear loss functions 
(as in quantile regression or support vector machines).    

Our results state that, in contrast to  TL and FCP methods, SS is constructive in the  linear model  with the subgaussian noise, because 
 it does not rely  on the unknown parameters as the true vector 
$\mathring{\beta}$ or the cone invertibility factors. 
Indeed, we will establish in Section~\ref{sec:models}
that the choice of the tuning parameter $\lambda$ as
\begin{equation}
\label{lambda_n}
\lambda=\sqrt{\frac{2 \sigma ^2 \log p}{n}} (1+o(1))
\end{equation}
leads to model selection consistency of the SS algorithm. 
Therefore, $\lambda$ only depends on $n$, $p$ and $\sigma^2$ that is an upper bound on the noise parameter. 
The assumption that $\sigma^2$  is known  is common in the literature investigating theoretical properties of variable selection procedures \cite{YeZhang10, BuhlmannGeer11,FanEtAl14}.

Moreover, we will also prove that model selection consistency of the SS algorithm holds under weaker conditions than for competitive procedures. For instance, the algorithm of \cite{WangEtAl13} requires that  
\begin{equation}
\label{wang_n}
\frac{\log p}{\delta} =o(1) \quad {\rm and } \quad \frac{t }{\bmin} \sqrt{\frac{\log p}{n}}=o(1),
\end{equation}
 where $\delta$ is a scaled Kullback-Leibler distance between the true set and its submodels (defined in \eqref{delta}) and $\bmin$ is the minimal signal strength. However, for the SS procedure we need that
\begin{equation}
\label{us_n}
\frac{\log p}{\delta} =O(1) \quad {\rm and } \quad \frac{1}{\bmin}\sqrt{\frac{\log p}{n}}=O(1),
\end{equation}
which, among others,  weakens significantly the {\it beta-min} condition. More detailed description of the SS algorithm and comparisons to other procedures is given in Section \ref{sec:models}. This analysis enables us to claim that for subgaussian linear models the SS algorithm seems to be the benchmark for the theory of model selection as it is constructive,
computationally efficient and leads to consistent model selection under weak assumptions. 
Similarly to TL or FCP, the SS algorithm becomes non-constructive, if we go  beyond the  linear model with the subgaussian noise.
Notice that in \eqref{lambda_n}, \eqref{wang_n} and \eqref{us_n} we assume, similarly to \cite{WangEtAl13}, that predictors are scaled to have the $l_2$-norm equal to $\sqrt{n}.$ However, in the rest of the paper we will use more convenient notation that the $l_2$-norm of predictors is one. Therefore, the expression $\sqrt{n}$ will not appear in analogs of \eqref{lambda_n}, \eqref{wang_n} and \eqref{us_n} in Section \ref{sec:models}.

Although TL, FCP or SS algorithms use the Lasso estimators only for one value of the penalty, which is convenient  for theoretical analysis, 
the practical Lasso implementations return coefficient estimators for all possible penalty values as in   the R~package {\tt LARS} 
described in \citet{EfronEtAl04} or for a given net of them as in the R~package {\tt glmnet} described in \citet{FriedmanEtAl10}. 
Similarly, using a net of penalty values, the FCP algorithm has been implemented for linear models in the R~package {\tt SparseNet} \citep{MazumderEtAl11}) 
and for logistic models in  the R~package
{\tt ncvreg} \citep{ncvreg}. 
Our contribution is also  the SSnet algorithm (Algorithm \ref{alg:SOSnet} below), 
which is a {\it practical} version of the SS algorithm. 
 SSnet uses {\tt glmnet} to calculate the Lasso for a net of penalty values, 
then again it selects the final model from a small family by minimizing GIC. In numerical experiments we investigate properties of SSnet in model selection as well as prediction and compare them to competitive procedures.
Using synthetic and real-world data sets we show that SSnet is more accurate than implementations of FCP. The variant of SSnet 
 is contained  in the R package ,,DMRnet'' and available on the CRAN repository.

GIC is a popular procedure in choosing the final model in variable selection. In the literature there are many papers     investigating its properties, 
for instance \cite{KimKwonChoi2012, WangEtAl13} in linear models, \cite{FanTang2013, HuiWartonFoster2015} in GLM, \cite{KATAYAMA2014138} 
in multivariate linear regression,  \cite{ZhangSVM2016} for SVM and \cite{KimJeon2016} for general convex loss functions. 
 GIC is often applied to pathwise algorithms under a common assumption that the true model is on this path. 
This condition excludes  Lasso-based pathwise algorithms as it is only fulfilled under   restrictive assumptions. 
One can overcome this problem using a three-step procedures, for instance the Lasso and non-convex penalized regression \citep{WangEtAl13}  or 
the Lasso and thresholding \citep{KimJeon2016}. 
However, it makes the algorithm computationally more complex or one has to find a threshold that recovers the true model on the path, respectively. 
In contrast, the first step of the proposed procedure is related only to the Lasso. 
Indeed, we need only that the model with correctly separated predictors is on the path. 
 This is  guaranteed  for the Lasso under mild assumptions. Therefore, it makes our procedure simpler and computationally more efficient.

The paper is organized as follows: in the next section we describe subgaussian GLM  and algorithms that we work with. Moreover, we establish bounds for the selection error of the proposed procedures. These results are extended to models with general convex contrasts in Section \ref{convex}. 
In Section \ref{experiments} we investigate properties of estimators on simulated and real data sets. 
The paper is concluded in Section \ref{conclusions}. All proofs and auxiliary results are relegated to the appendix. 

\section{Subgaussian Generalized Linear Models}
\label{sec:models}

In this section we start with definitions of considered models and estimation criteria. 
Next,  we present model selection algorithms and  state exponential upper  bounds on their selection errors. 

\subsection{Models}
The way we model data will encompass normal linear and logistic models as premier examples. Our  assumptions are stated in their most general 
form which allows proving exponential bounds for probability of the selection error without obscuring their essentiality.
In the paper we consider independent data $(y_1,x_1),(y_2,x_2),\ldots,(y_n,x_n)$, 
where $y_i \in \mathbb{R},~ x_i \in\mathbb{R}^p$ for $i=1,2,\dots,n$.
We assume that for some {\it true} $\mathring{\beta} \in \mathbb{R}^p$ and a known differentiable {\it cumulant} function 
$\gamma:\mathbb{R}\rightarrow\mathbb{R}$ 
\begin{equation}
\label{inverseLink}
 \mathbb{E}y_i=\dot\gamma(x_i^T\mathring{\beta})~\text{for}~i=1,2,\dots,n,
\end{equation}
where $\dot \gamma$ denotes the derivative of $\gamma.$
Note that (\ref{inverseLink}) is satisfied in particular by the Generalized Linear Models (GLM) 
with a canonical link function.
and a nonlinear regression with an additive error.
Let $\eta_i=x_i^T\mathring{\beta}$.
 For $\eta=(\eta_1,\ldots,\eta_n)^T$ 
we define $\vec{\gamma}(\eta)=(\gamma(\eta_1),\ldots,\gamma(\eta_n))^T$ and similarly 
$\vec{\dot \gamma}(\eta)=(\dot \gamma(\eta_1),\ldots,\dot \gamma(\eta_n))^T$.

Let $X=[x_{.1},\ldots,x_{.p}]=[x_1,\ldots,x_n]^T$ be a $n\times p$ matrix of experiment and
$J\subseteq \{1,2,\ldots,p\}=F$  be an arbitrary subset of the {\it full model} $F$, ${\bar J}= F\setminus{ J}$.
As $J$ may be viewed as a sequence of zeros and ones on $F$, $|J|=|J|_1$ denotes cardinality of $J$. 
Let $\beta_J$ be a subvector of $\beta$ with elements having indices in $J$, $X_J$ be a submatrix of $X$ with columns having indices in $J$
and $r(X_J)$ denotes the rank of $X_J$. 
Moreover, let $H_J=X_J (X_J^TX^J)^{-1} X_J^T$ be an orthogonal projection matrix onto the subspace spanned by columns of $X_J$.
Linear model pertaining to predictors being columns of $X_J$ will be frequently identified as $J$. 
In particular, let $T$ denotes a {\it true model} that is $T={\rm supp}(\mathring{\beta})=\{j \in F: \mathring{\beta}_j \neq0\}$ and $t=|T|$. Next, for $\beta \in \mathbb{R}^p$ and $q\geq 1$  
let $|\beta|_q = (\sum_{j=1}^p |\beta_j|^q)^{1/q}$ be the $\ell_q$ norm. The only  exception is  the $\ell _2$ norm, for which we will use the special notation $||\beta||.$

In the further argumentation important roles are played by 
\begin{equation}
\label{delta_k}
\delta_k = \min_{J\subset T,\,|T\setminus J|=k } ||(I-H_J)X_T\mathring{\beta}_T||^2,
\end{equation}
for $k=1,\ldots,t-1.$ They describe how much the true value $X \mathring{\beta} = X_T\mathring{\beta}_T$
differs from its projections on submodels of true set $T.$ For the normal linear model they are related to the Kullback-Leibler divergence between two normal densities \citep[Section 3]{PokarowskiMielniczuk15}.
Finally, we define the sum of balls
\begin{equation}
\label{ball}
\mathbb{B}\equiv \mathbb{B}(X,\mathring{\beta},\bar t)= \bigcup_{J\supset T, r(X_J)=|J| \leq 
\bar{t}} \{\beta_J: ||X_J(\mathring{\beta}_J-\beta_J)||^2 \leq \delta_{t-1} \}.
\end{equation}
We assume that $t \leq \bar t < n \wedge p,$ which implies that \eqref{ball} consists of {\it sparse} vectors.

We assume also that a {\it total cumulant} function 
\begin{equation}
\label{total_cumulant}
g(\beta)=\sum_{i=1}^n \gamma(x_i^T\beta)
\end{equation}  
is convex and, additionally, {\it strongly convex} at $\mathring{\beta}$ in a sense that  there exists $c \in (0,1]$ such that for 
all  $\beta \in \mathbb{B} $  we have
\begin{equation}
\label{convexCum}
g(\beta) \geq g(\mathring{\beta}) +(\beta-\mathring{\beta})^T \dot g(\mathring{\beta}) + \frac{c}{2}(\mathring{\beta}-\beta)^T X^TX(\mathring{\beta}-\beta).
\end{equation}
We note that this crucial property of the total cumulant is   slightly weaker than an usual definition of strong convexity which
would have a second derivative of $g$ at $\mathring{\beta}$ in place of $X^TX$.
 Let us remark that 
$\dot g(\beta) = X^T\vec{\dot \gamma}(X\beta)$. 

Moreover, we assume that centred responses $\varepsilon_i = y_i - \mathbb{E}y_i$ have a {\it subgaussian distribution}
with the same  constant $\sigma$, that is for $i=1,2,\dots,n$ and $u \in \mathbb{R}$ we have
\begin{equation}
\label{subgauss}
\mathbb{E}\exp(u\varepsilon_i) \leq \exp(\sigma^2u^2/2).
\end{equation}

{\bf Examples.} Two most important representatives of GLM are the normal linear model and logistic regression. In the normal linear model 
$$
y_i=x_i^T\mathring{\beta} + \varepsilon _i, \quad i=1,2,\dots,n,
$$
where the noise variables $\varepsilon_i$ are independent and normally distributed $N(0,\mathring{\sigma}^2).$ Therefore,  assumptions \eqref{inverseLink}, \eqref{convexCum} and \eqref{subgauss} are satisfied with $\gamma(\eta_i)=\eta_i^2/2, $ $c=1$ and 
 any $\sigma\geq \mathring{\sigma},$ respectively. In logistic regression the response variable 
is dichotomous $y_i
\in \{0,1\}$ and we assume that 
$$
P(y_i=1) = \frac{\exp(x_i^T\mathring{\beta})}{\exp(x_i^T\mathring{\beta})+1}, \quad i=1,\ldots,n.
$$
In this model assumptions \eqref{inverseLink} and \eqref{convexCum}  are satisfied with $\gamma(\eta_i)=\log(1+\exp(\eta_i))$  and \[
c=\min_i \min_{\beta \in \mathbb{B}} \exp(x_i^T\beta)/(1+\exp(x_i^T\beta))^2,
\] 
respectively. Finally, as $(\varepsilon_i)$ are bounded random variables,  then (\ref{subgauss}) is satisfied with any $\sigma\geq 1/2$.

\begin{algorithm}[tb]
   \caption{TL and SS}
   \label{alg:SS}
\begin{algorithmic}
   \STATE {\bfseries Input:} $y$, $X$ and $\lambda,\tau$
   \STATE {\bfseries Screening} (Lasso) {$~~\widehat{\beta} = \text{argmin}_{\beta} \left\{ \ell(\beta) + \lambda |\beta|_1 \right\}$};
 
\medskip
 \STATE {\bfseries Thresholded Lasso} {$~~\widehat{T}_{TL} = \{j: |\widehat{\beta}_j|>\tau\}$};

\medskip
\STATE {\bfseries Selection} (GIC)
\STATE{order nonzero $|\widehat{\beta}_{j_1}| \geq  \ldots \geq |\widehat{\beta}_{j_s}|$, where $s = |\text{supp} \widehat{\beta}|$;}
\STATE{set ${\cal J} = \left\{ \{j_1\},   \{j_1, j_2\}, \ldots, \text{supp} \widehat{\beta}\right\}$;}
\STATE{$\widehat{T}_{SS} = \text{argmin}_{J \in {\cal J}} \left\{ \ell_J + \lambda^2/2 |J| \right\}$}.

\medskip
\STATE {\bfseries Output:} $\widehat{T}_{TL}, \widehat{T}_{SS} $

\end{algorithmic}
\end{algorithm}

\subsection{Fitting Algorithms}
For estimation of  $\mathring{\beta}$ we consider a {\it loss function}
\begin{equation}
\label{loss}
\ell(\beta)=\sum_{i=1}^n [\gamma(x_i^T\beta)-y_ix_i^T\beta] = g(\beta)-\beta^TX^Ty,
\end{equation}
where $y=(y_1,\ldots,y_n)^T$. It is easy to see that $\dot \ell(\beta) = X^T(\vec{\dot \gamma}(X\beta)-y)$,
and consequently $\dot \ell(\mathring{\beta}) = -X^T\varepsilon$ for $\varepsilon=(\varepsilon_1,\ldots,\varepsilon_n)^T$.
Moreover, observe that $\mathring{\beta} = \text{argmin}_{\beta} \mathbb{E}\ell(\beta)$ and (\ref{convexCum}) is equivalent to 
strong convexity of $\ell$ at $\mathring{\beta}$ for all $\beta\in \mathbb{B}$ 
\begin{equation}
\label{convexLoss}
\ell(\beta) \geq \ell(\mathring{\beta}) +(\mathring{\beta}-\beta)^T X^T\varepsilon + 
\frac{c}{2}(\mathring{\beta}-\beta)^T X^TX(\mathring{\beta}-\beta).
\end{equation}
Let $\widehat{\beta}^{ML}_J =\text{argmin}_{\beta_J} \ell(\beta_J)$ denotes a {\it minimum loss estimator} 
based on $y$ and $\{x_{.j}, j \in J \}$. Denote $\ell_J=\ell(\widehat{\beta}^{ML}_J)$. Note that for GLM estimator $\widehat{\beta}^{ML}_J$
coincides with a maximum likelihood estimator for a model pertaining to $J$. 

In Algorithm~\ref{alg:SS} we present two selection procedures: the first one is the well-known Thresholded Lasso (TL)  method which consists of retaining only 
these variables for which absolute values of their Lasso estimators exceed a certain threshold~$\tau$. The second one is novel and named the Screening--Selection (SS) 
procedure. It  finds the minimal value of the Generalized Information Criterion (GIC) for the nested family which is constructed using ordering 
of the nonzero Lasso coefficients.

In the classical (low-dimensional) case  model selection is often based on the Bayesian information criterion that is similar to the second step of Algorithm~\ref{alg:SS} with $\lambda^2=\log n.$ Model selection properties of this procedure are proven in \cite{shao97}. In the current paper we validate that information criteria are also useful in the high-dimensional case. As it will be shown the important difference is that in the high-dimensional scenario parameter $\lambda^2$ should be proportional to $\log p$ instead of $\log n$ (at least for linear models
 with the subgaussian noise).  



\subsection{A Selection Error Bound for TL}
In order to make the  exposition simpler we assume that columns of $X$ are normalized in such a way that 
$||x_{.j}||=1$ for $j=1,\ldots,p$. Moreover, let $\mathring{\beta}_{min}=\min_{j\in T}|\mathring{\beta}_j|$.

First we generalize a characteristic of linear models which quantifies the degree of separation 
between the true model $T$ and other models, which was introduced in Ye and Zhang \cite{YeZhang10}. For $a\in(0,1)$  consider  a signed pseudo-cone
 \begin{multline}
 \label{cone}
 {\cal C}_a=\bigg\{\nu \in \mathbb{R}^p: |\nu_{\bar T}|_1\leq \frac{1+a}{1-a}|\nu_{ T}|_1,\, \nu_jx_{.j}^T\big[\vec{\dot\gamma}(X(\mathring{\beta}+\nu))-\vec{\dot\gamma}(X\mathring{\beta})\big] \leq 0,\, j\in\bar T\bigg\}.
 \end{multline}
For $q\geq 1$ and $ a \in (0,1)$ let a Sign-Restricted Pseudo-Cone Invertibility Factor (SCIF)  be defined as
 \begin{equation}
 \label{SCIF}
 \zeta_{a,q}=\inf_{0 \neq \nu\in {\cal C}_a}\frac {\big|X^T\big[\vec{\dot\gamma}(X(\mathring{\beta}+\nu))
                                                     -\vec{\dot\gamma}(X\mathring{\beta})\big]\big|_{\infty} } {|\nu|_q}\:.
 \end{equation}
Notice that in the linear model the numerator of \eqref{SCIF} is simply  $|X^TXv|_\infty.$ 
In the case $n>p$ one usually uses the minimal eigenvalue of the matrix $X ^TX$ to express the strength of correlations between predictors. Obviously, in the high-dimensional scenario this value is zero. 
Therefore, SCIF can be viewed as an useful analog of the minimal eigenvalue for the case $p>n.$

We  let $\zeta_a=\zeta_{a,\infty}$. In comparison to more popular restricted eigenvalues \citep{BickelEtAl09} or compatibility constants \citep{GeerBuhlmann09}, variants of SCIF enable sharper 
$\ell_q$ estimation error bounds of the Lasso for $q>2$  \citep{YeZhang10, HuangZhang12, ZhangZhang12}. 

The following lemma is a main tool in proving model selection consistency of the TL algorithm. For the linear model it was stated in \citep[Theorem 3]{YeZhang10}. We generalize it to GLM. 

\begin{lemma}
\label{LassoBound}
If $\ell$ is convex and $a \in (0,1)$, then on $\{|X^T\varepsilon|_\infty\leq a\lambda\}$ we have 
$$ |\hat{\beta}-\mathring{\beta}|_q \leq (1+a)\lambda\zeta_{a,q}^{-1} $$.
\end{lemma}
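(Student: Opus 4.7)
The plan is to mimic the classical linear-model argument of Ye--Zhang but to replace $X^TX\nu$ by the nonlinear quantity $X^T[\vec{\dot\gamma}(X\hat\beta)-\vec{\dot\gamma}(X\mathring\beta)]$, so that the general SCIF from \eqref{SCIF} can be applied directly. Throughout, set $\nu=\hat\beta-\mathring\beta$ and work on the event $\{|X^T\varepsilon|_\infty\le a\lambda\}$.

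First I would write the KKT conditions for the Lasso. Since $\dot\ell(\beta)=X^T(\vec{\dot\gamma}(X\beta)-y)$ and $y=\vec{\dot\gamma}(X\mathring\beta)+\varepsilon$, there exists a subgradient $\hat z\in\partial|\hat\beta|_1$ (so $|\hat z|_\infty\le1$ and $\hat z_j=\operatorname{sign}(\hat\beta_j)$ when $\hat\beta_j\ne 0$) with
\[
X^T\bigl[\vec{\dot\gamma}(X\hat\beta)-\vec{\dot\gamma}(X\mathring\beta)\bigr]=X^T\varepsilon-\lambda\hat z.
\]
Taking the $\ell_\infty$ norm and using the event immediately gives $|X^T[\vec{\dot\gamma}(X\hat\beta)-\vec{\dot\gamma}(X\mathring\beta)]|_\infty\le(1+a)\lambda$, which is the numerator bound I will feed into SCIF at the end.

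Next I would verify that $\nu\in\mathcal C_a$. For the $\ell_1$-cone part I would use the standard ``basic inequality'': optimality of $\hat\beta$ gives $\ell(\hat\beta)+\lambda|\hat\beta|_1\le\ell(\mathring\beta)+\lambda|\mathring\beta|_1$, and convexity of $\ell$ gives $\ell(\hat\beta)\ge\ell(\mathring\beta)+\dot\ell(\mathring\beta)^T\nu=\ell(\mathring\beta)-\varepsilon^TX\nu$. Combining them and applying H\"older's inequality with $|X^T\varepsilon|_\infty\le a\lambda$ yields $\lambda(|\hat\beta|_1-|\mathring\beta|_1)\le a\lambda|\nu|_1$, and the decomposition $|\hat\beta|_1\ge|\mathring\beta|_1-|\nu_T|_1+|\nu_{\bar T}|_1$ then gives $(1-a)|\nu_{\bar T}|_1\le(1+a)|\nu_T|_1$.

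For the sign-restriction part of $\mathcal C_a$ I would isolate the $j$-th coordinate ($j\in\bar T$) in the displayed KKT identity: $x_{.j}^T[\vec{\dot\gamma}(X\hat\beta)-\vec{\dot\gamma}(X\mathring\beta)]=x_{.j}^T\varepsilon-\lambda\hat z_j$, then multiply by $\nu_j=\hat\beta_j$. Because $\hat z_j=\operatorname{sign}(\hat\beta_j)$ whenever $\hat\beta_j\ne 0$, one has $\nu_j\hat z_j=|\nu_j|$, while $|\nu_j\,x_{.j}^T\varepsilon|\le a\lambda|\nu_j|$; hence $\nu_j\,x_{.j}^T[\vec{\dot\gamma}(X(\mathring\beta+\nu))-\vec{\dot\gamma}(X\mathring\beta)]\le-(1-a)\lambda|\nu_j|\le 0$, which is exactly the sign condition in \eqref{cone}. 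Finally I apply the definition of $\zeta_{a,q}$ to $\nu\in\mathcal C_a$, use the $\infty$-norm bound from step one, and divide to conclude $|\nu|_q\le(1+a)\lambda/\zeta_{a,q}$.

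The main subtlety, and the only place where the GLM extension does real work beyond the linear case, is the sign-coordinate verification: in the linear model $x_{.j}^T[\vec{\dot\gamma}(X\hat\beta)-\vec{\dot\gamma}(X\mathring\beta)]$ collapses to $x_{.j}^TX\nu$, but here one must carry the nonlinear residual through the KKT equation. The definition of $\mathcal C_a$ is crafted precisely to accommodate this, so the computation goes through once one writes the coordinatewise KKT relation carefully; everything else is a direct transcription of the Ye--Zhang argument.
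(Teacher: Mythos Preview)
Your proof is correct and follows essentially the same route as the paper: write the KKT system, verify $\nu=\hat\beta-\mathring\beta\in\mathcal C_a$ (both the $\ell_1$-cone inequality and the coordinatewise sign condition), and feed the $\ell_\infty$ bound on $X^T[\vec{\dot\gamma}(X\hat\beta)-\vec{\dot\gamma}(X\mathring\beta)]$ into the SCIF definition. The only cosmetic difference is that the paper obtains the cone inequality from the monotonicity of the gradient of $g$ (the symmetrized Bregman divergence $(\hat\beta-\mathring\beta)^T[\dot g(\hat\beta)-\dot g(\mathring\beta)]\ge 0$) rather than from the basic inequality you use; both are standard and equivalent here.
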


Next, we bound the selection error of the TL algorithm in GLM.
\begin{theorem}
\label{th1}
If $\ell$ is convex, $(\varepsilon_i)_i$ are subgaussian with $\sigma$ and for numbers $a_1,a_2 \in (0,1)$  we have 
\begin{equation*}
2a_1^{-2}a_2^{-1}\sigma^2\log p \leq \lambda^2 
 \leq (1+a_1)^{-2}\zeta_{a_1}^2\tau^2 < (1+a_1)^{-2}\zeta_{a_1}^2\mathring{\beta}_{min}^2/4,
\end{equation*}
then
 \begin{equation}
 \label{corEq1}
 \mathbb{P}(\hat{T}_{TL}\neq T)\leq 2\exp\bigg(-\frac{(1-a_2)a_1^2\lambda^2}{2\sigma^2}\bigg).
 \end{equation}

\end{theorem}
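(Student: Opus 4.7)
The plan is to control the selection error by intersecting with the good event $A = \{|X^T\varepsilon|_\infty \le a_1 \lambda\}$ on which Lemma \ref{LassoBound} applies, and then to show that the conditions on $\lambda, \tau$ and $\mathring{\beta}_{\min}$ force $\hat T_{TL} = T$ on this event. The complement $A^c$ will be handled by subgaussian concentration combined with a union bound over the $p$ predictors.

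First I would invoke Lemma \ref{LassoBound} with $q=\infty$ and $a=a_1$: on $A$ we have $|\hat\beta - \mathring{\beta}|_\infty \le (1+a_1)\lambda \zeta_{a_1}^{-1}$. The hypothesis $\lambda^2 \le (1+a_1)^{-2}\zeta_{a_1}^2\tau^2$ yields $(1+a_1)\lambda\zeta_{a_1}^{-1} \le \tau$, so $|\hat\beta_j - \mathring{\beta}_j| \le \tau$ for every $j$. For $j \notin T$ this means $|\hat\beta_j| \le \tau$, so $j \notin \hat T_{TL}$. For $j \in T$, the strict inequality $\tau^2 < \mathring{\beta}_{\min}^2/4$ gives $\tau < \mathring{\beta}_{\min}/2$, hence
\[
|\hat\beta_j| \ge |\mathring{\beta}_j| - |\hat\beta_j - \mathring{\beta}_j| \ge \mathring{\beta}_{\min} - \tau > \tau,
\]
so $j \in \hat T_{TL}$. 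Therefore $\{\hat T_{TL}\neq T\} \subset A^c$.

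Next I would bound $\mathbb{P}(A^c)$. Since $\|x_{.j}\|=1$, the random variable $x_{.j}^T\varepsilon = \sum_i x_{ij}\varepsilon_i$ is a sum of independent subgaussian variables and therefore subgaussian with parameter $\sigma$; the standard subgaussian tail bound gives $\mathbb{P}(|x_{.j}^T\varepsilon| > a_1\lambda) \le 2\exp(-a_1^2\lambda^2/(2\sigma^2))$. A union bound over $j=1,\dots,p$ yields
\[
\mathbb{P}(A^c) \le 2p \exp\!\Bigl(-\frac{a_1^2\lambda^2}{2\sigma^2}\Bigr) = 2\exp\!\Bigl(\log p - \frac{a_1^2\lambda^2}{2\sigma^2}\Bigr).
\]
The lower bound $\lambda^2 \ge 2 a_1^{-2} a_2^{-1} \sigma^2 \log p$ rewrites as $\log p \le a_2\, a_1^2\lambda^2/(2\sigma^2)$; substituting this into the exponent produces exactly the claimed bound $2\exp\bigl(-(1-a_2)a_1^2\lambda^2/(2\sigma^2)\bigr)$.

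There is no real obstacle beyond careful bookkeeping of the three inequalities that define the admissible range of $\lambda$. The only step that needs slight attention is checking that the subgaussian constant of each linear combination $x_{.j}^T\varepsilon$ is indeed $\sigma$ and not $\sigma\|x_{.j}\|_?$; the normalization $\|x_{.j}\|=1$ assumed in this subsection makes this automatic, and the rest reduces to verifying that the constraints on $\lambda$ and $\tau$ align with what Lemma \ref{LassoBound} delivers.
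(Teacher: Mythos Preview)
Your proof is correct and follows essentially the same route as the paper: show that on the event $\{|X^T\varepsilon|_\infty\le a_1\lambda\}$ Lemma~\ref{LassoBound} forces $\hat T_{TL}=T$ via the chain $(1+a_1)\lambda\zeta_{a_1}^{-1}\le\tau<\mathring\beta_{\min}/2$, and then bound the complement by the subgaussian tail plus union bound, absorbing the factor $p$ with the lower bound on $\lambda^2$. The only cosmetic difference is that the paper cites its Lemma~\ref{lemSubG}~(i) for the tail bound rather than restating it, but the argument is identical.
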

Constant $a_2$ is used to remove multiplicative factor $p$ from the exponential  bound at  the expense of slightly diminishing  the exponent 
in (\ref{corEq1}). Note that assumptions of Theorem \ref{th1} stipulate that the truncation level $\tau$ is contained in the interval 
$[(1+a_1)\lambda\zeta_{a_1}^{-1},\mathring{\beta}_{min}/2)$,  whose both endpoints are
unknown, because $\zeta_{a_1}$ and $\mathring{\beta}_{min}$ are unknown. Therefore, in practice $\tau$ cannot be chosen that makes the TL algorithm non-constructive. 
Analogous theorems for FCP for linear models and logistic regression can be found in Fan et al. \cite[Corollary~3 and Corollary~5 ]{FanEtAl14}. However, they require an additional  assumption on  the minimal eigenvalue of $X_T^TX_T$ and the proof is more difficult. Moreover, the choice of the tuning parameters in these methods also requires unknown $\zeta_{a}$ or $\mathring{\beta}_{min}$.

\subsection{A Selection Error Bound for SS}
\label{selection_error_GLM}

A scaled Kullback-Leibler (K-L) distance between $T$ and its submodels is given in  \citet{ShenEtAl12, ShenEtAl13} as
 \begin{equation}
 \label{delta}
\delta=\min_{J\subset T}\frac{||(I-H_J)X_T\mathring{\beta}_T||^2}{|T \setminus J|}
 \end{equation}
or just $\min\limits_{k=1,\ldots, t-1} \delta_k/k,$ if we use notation \eqref{delta_k}.
Different variants of the K--L distance have been often used in the consistency analysis of selection algorithms \citep[Section 3.1]{PokarowskiMielniczuk15}, but $\delta$ defined in \eqref{delta} seems to lead to optimal results  \citep[Theorem~1]{ShenEtAl13}.
Now we introduce technical constants $a_1,\ldots,a_4$  that allow to avoid {\it ad-hoc} coefficients in the main results and simplify asymptotic considerations.
For given $1/2<a_1<1$ define $a_2=1-(1-\log (1-a_1))(1-a_1)$, 
$a_3=2-1/a_1$ and $a_4=\sqrt{a_1a_2}$. Note that $a_2,a_3$ and $a_4$ are functions of $a_1$ and  obviously if $a_1 \rightarrow 1$, then  
$a_2,a_3,a_4 \rightarrow 1$. Moreover, for two real numbers $a,b$ we denote $a \vee b = \max(a,b)$ and $a \wedge b = \min(a,b).$

\begin{theorem}
\label{th2}
Assume (\ref{inverseLink}), (\ref{convexCum}), (\ref{subgauss}) and that for $a_1 \in (1/2,1)$ 
we have
\begin{equation}
\label{lambda_glm}
\frac{2\sigma^2\log p}{a_3a_2a_1c} \vee \frac{\sigma^2t}{(1-a_1)^2c} \leq \lambda^2 < 
\frac{c \dt  }{16(\bar{t}-t)} \wedge  \frac{c \delta }{(1+\sqrt{2(1-a_1)})^2} \wedge \frac{\zeta_{a_4}^2\mathring{\beta}_{min}^2}{4(1+a_4)^2}.
\end{equation}
 Then
 \begin{equation}
 \label{main2}
 \mathbb{P}(\hat T_{SS} \neq T)\leq 4.5\exp\bigg(-\frac{a_2(1-a_1)c\lambda^2}{2\sigma^2}\bigg).
 \end{equation}

\end{theorem}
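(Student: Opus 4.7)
The plan is to split $\{\hat T_{SS}\neq T\}$ into three events: (S) the Lasso ordering fails to separate $T$ from $\bar T$; (U) separation holds but GIC prefers an underfit $J\subsetneq T$; (O) separation holds but GIC prefers an overfit $J\supsetneq T$. When (S) does not occur the nested family $\mathcal{J}$ contains $T$ at position $t$, so comparing $T$ with other $J\in\mathcal{J}$ reduces to prefixes of $T$ (underfit, $|J|<t$) or $T$ together with some $K\subset\bar T$ with $|K|\le \bar t-t$ (overfit, $|J|>t$). The union bound gives $\mathbb{P}(\hat T_{SS}\neq T)\le\mathbb{P}(S)+\mathbb{P}(U)+\mathbb{P}(O)$, and each summand should be bounded by a constant multiple of the target exponential.

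For (S), I would invoke Lemma~\ref{LassoBound} with $a=a_4=\sqrt{a_1a_2}$. On $\Omega_0=\{|X^T\varepsilon|_\infty\le a_4\lambda\}$ the lemma yields $|\hat\beta-\mathring\beta|_\infty\le(1+a_4)\lambda/\zeta_{a_4}$, and the assumption $\lambda^2<\zeta_{a_4}^2\mathring\beta_{min}^2/(4(1+a_4)^2)$ implies this is strictly less than $\mathring\beta_{min}/2$. Hence all coordinates of $\hat\beta$ with $j\in T$ have absolute value exceeding $\mathring\beta_{min}/2$, while all $j\in\bar T$ have absolute value at most $\mathring\beta_{min}/2$; ordering therefore places $T$ exactly on the path. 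The probability of $\Omega_0^c$ is bounded by a union-bound subgaussian tail $2p\exp(-a_1a_2\lambda^2/(2\sigma^2))$, and the condition $\lambda^2\ge 2\sigma^2\log p/(a_3 a_2 a_1 c)$ with $a_3=2-1/a_1$ is precisely what converts $2p\exp(-a_1a_2\lambda^2/(2\sigma^2))$ into a bound of the required form $\exp(-a_2(1-a_1)c\lambda^2/(2\sigma^2))$ up to a small constant.

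For (O), fix $J\supsetneq T$ with $|J|\le\bar t$. KKT optimality of $\hat\beta_J^{ML}$ on coordinates in $J$, combined with the strong-convexity inequality \eqref{convexLoss} applied at $\hat\beta_J^{ML}\in\mathbb{B}$, will give both a quadratic lower bound $\ell(\mathring\beta)-\ell_J\ge\tfrac{c}{2}\|X(\hat\beta_J^{ML}-\mathring\beta)\|^2$ and, after a first-order expansion, the upper bound $\ell(\mathring\beta)-\ell_J\le(\hat\beta_J^{ML}-\mathring\beta)^TX^T\varepsilon$. Combining the two yields $\ell_T-\ell_J\le\ell(\mathring\beta)-\ell_J\le\|P_J\varepsilon\|^2/(2c)$ for a rank-$(|J|-t)$ projection-type form (in the linear case it is literally $\tfrac12\|(H_J-H_T)\varepsilon\|^2$). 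A Laurent--Massart tail for subgaussian quadratic forms then gives $\mathbb{P}(\ell_T-\ell_J>\lambda^2(|J|-t)/2)\le\exp(-(|J|-t)\cdot\text{const}\cdot c\lambda^2/\sigma^2)$, and summing over $\binom{p-t}{|J|-t}$ choices and all $|J|-t\in\{1,\dots,\bar t-t\}$ is absorbed by $\lambda^2\ge 2\sigma^2\log p/(a_3a_2a_1c)$ together with $\lambda^2<c\delta_{t-1}/(16(\bar t-t))$. For (U) with $J\subsetneq T$, the K--L bound $\|(I-H_J)X_T\mathring\beta_T\|^2\ge\delta(t-|J|)$ and strong convexity yield $\ell_J-\ell_T\ge\tfrac{c}{2}\delta(t-|J|)-|u^T\varepsilon|$ with $\|u\|^2\le\|(I-H_J)X_T\mathring\beta_T\|^2$, so by subgaussianity $\mathbb{P}(\ell_J-\ell_T<\lambda^2(t-|J|)/2)$ is bounded by a Gaussian tail depending on $(c\delta-\lambda^2(1+\sqrt{2(1-a_1)})^2)$, which is positive by assumption; the union over $\binom{t}{|J|}$ subsets is handled by $\lambda^2\ge\sigma^2 t/((1-a_1)^2c)$.

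The main obstacle will be the overfit bound: unlike the normal linear case where $\ell_T-\ell_J$ is exactly a chi-square quadratic form, for general GLM one must combine Taylor expansion about $\hat\beta_J^{ML}$ with \eqref{convexLoss} to reduce the loss increment to a subgaussian quadratic form at the cost of the constant $c$, and then establish the Laurent--Massart type deviation carefully so that the union bound over $\binom{p-t}{k}$ overfits for $k=1,\dots,\bar t-t$ collapses into a single exponential matching the target exponent $a_2(1-a_1)c\lambda^2/(2\sigma^2)$. Getting the constants $a_2,a_3,a_4$ to line up so that the combined bound is at most $4.5\exp(-a_2(1-a_1)c\lambda^2/(2\sigma^2))$ is bookkeeping but is the delicate part of the argument, and it is the reason the three side conditions $\lambda^2\ge 2\sigma^2\log p/(a_3a_2a_1c)$, $\lambda^2\ge\sigma^2t/((1-a_1)^2c)$ and $\lambda^2<c\delta_{t-1}/(16(\bar t-t))$ all appear simultaneously.
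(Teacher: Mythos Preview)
Your decomposition into (S), (U), (O) and your treatment of the screening event (S) match the paper exactly: Lemma~\ref{LassoBound} with $a=a_4=\sqrt{a_1a_2}$ and the subgaussian union bound, with the assumption $\lambda^2\ge 2\sigma^2\log p/(a_3a_2a_1c)$ turning $2p\exp(-a_4^2\lambda^2/2\sigma^2)$ into $2\exp(-a_2(1-a_1)c\lambda^2/2\sigma^2)$.

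The underfit and overfit treatments, however, each contain a real gap specific to the GLM setting.

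\emph{Underfit.} Your claim that strong convexity yields $\ell_J-\ell_T\ge\tfrac{c}{2}\delta(t-|J|)-|u^T\varepsilon|$ with a vector $u$ satisfying $\|u\|^2\le\|(I-H_J)X_T\mathring\beta_T\|^2$ is essentially a linear-model argument (and is indeed what the paper uses for Theorem~\ref{th_LM}). In the GLM case the natural candidate $u=X_T(\mathring\beta_T-\hat\beta_J^{ML})$ is \emph{random}, its norm satisfies $\|u\|^2\ge\|(I-H_J)X_T\mathring\beta_T\|^2$ (inequality reversed), and you cannot apply Lemma~\ref{lemSubG}(i) to $u^T\varepsilon$ when $u$ depends on $\varepsilon$. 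The paper instead restricts to the sphere $\partial B_k=\{\beta_T:\|X_T(\beta_T-\mathring\beta_T)\|^2=\delta_k\}$, uses convexity to argue that $\mathcal{E}_k(\tau)\subseteq\{\min_{\partial B_k}\mathring\ell\le\tau\}$, and then applies Cauchy--Schwarz uniformly on the sphere to obtain $\mathring\ell(\beta_T)\ge -\sqrt{\delta_k\,\varepsilon^TH_T\varepsilon}+\tfrac{c}{2}\delta_k$. This reduces the underfit event to the \emph{quadratic} event $\mathcal{C}_T((1-a_1)c\lambda^2)=\{\varepsilon^TH_T\varepsilon\ge(1-a_1)c\lambda^2\}$, handled by Lemma~\ref{lemSubG}(iii) and the condition $\lambda^2\ge\sigma^2 t/((1-a_1)^2c)$.

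\emph{Overfit.} You correctly obtain $\ell_T-\ell_J\le\varepsilon^TH_J\varepsilon/(2c)$ via \eqref{convexLoss}, but $H_J$ has rank $|J|=t+m$, not $m=|J|-t$; the Wallace bound of Lemma~\ref{lemSubG}(iii) applied directly would not survive the union over $\binom{p-t}{m}$ supersets. The paper repairs this by splitting $\varepsilon^TH_J\varepsilon=\varepsilon^TH_T\varepsilon+\varepsilon^T(H_J-H_T)\varepsilon$ and using $\{\varepsilon^TH_J\varepsilon\ge mc\lambda^2\}\subseteq\mathcal{C}_T((1-a_1)c\lambda^2)\cup\mathcal{C}_{J\ominus T}(ma_1c\lambda^2)$. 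The first piece is $m$-free and coincides with the underfit quadratic event above (this is why both underfit and overfit share the single term $\exp(-ta_2\tau_1/2)$ in the final bound); the second has rank $m$ and combines cleanly with the binomial union bound. A second point you elide is that \eqref{convexLoss} only applies when $\hat\beta_J^{ML}\in\mathbb{B}$; the paper handles $\{\hat\beta_J^{ML}\notin B_J\}$ separately via a convex-combination trick and shows it is contained in $\{\varepsilon^TH_J\varepsilon\ge c^2\delta_{t-1}/16\}$, which is where the condition $\lambda^2<c\delta_{t-1}/(16(\bar t-t))$ enters.
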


Selection consistency, that is asymptotic correctness of $\hat{T}_{SS},$ now easily follows.
\begin{corollary}
\label{cor2}
Assume that $t=o(\log p)$ for $n \rightarrow \infty.$ Moreover, set $a_1=1-\sqrt{\frac{t}{2 \log p}}$ and
$\lambda^2=\frac{2\sigma^2 \log p}{a_3a_2a_1c}$. 
Then $\lambda^2=c^{-1}(2\sigma^2 \log p)(1+o(1))$. 
If additionally $\mathring{\beta}$ is asymptotically identifiable, i.e.
\[\varlimsup_n \frac{2\sigma^2 \log p/c}{\frac{c\dt}{16(\bar{t}-t)} \wedge (c\delta) \wedge (\zeta_{a_4}^2\mathring{\beta}_{min}^2/16)} < 1, 
~~~\text{then}~~~ \mathbb{P}(\hat T_{SS}\neq T)=o(1).
\]
\end{corollary}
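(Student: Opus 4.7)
The strategy is to reduce the corollary to a direct application of Theorem \ref{th2}: verify the stated asymptotic form of $\lambda^2$, check that the sandwich condition \eqref{lambda_glm} holds for $n$ large, and then argue that the resulting exponential bound tends to zero.

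First I would analyse the scaling constants. Since $t=o(\log p)$, we have $1-a_1=\sqrt{t/(2\log p)}\to 0$, hence $a_1\to 1$. For $a_2=1-(1-\log(1-a_1))(1-a_1)$ the factor $1-\log(1-a_1)$ grows only logarithmically in $\log p/t$ while $(1-a_1)\to 0$ polynomially, so $a_2\to 1$; similarly $a_3=2-1/a_1\to 1$ and $a_4=\sqrt{a_1a_2}\to 1$. Substituting into the definition of $\lambda^2$ yields $\lambda^2=c^{-1}(2\sigma^2\log p)(1+o(1))$, which is the first claim of the corollary.

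Second, I would verify \eqref{lambda_glm}. The lower bound $2\sigma^2\log p/(a_3a_2a_1c)\leq\lambda^2$ holds by construction. For the second lower bound, the choice of $a_1$ gives $(1-a_1)^2=t/(2\log p)$, hence $\sigma^2 t/((1-a_1)^2 c)=2\sigma^2\log p/c$, which is no larger than $\lambda^2$ since $a_1a_2a_3\in(0,1)$ on $a_1\in(1/2,1)$. For the three upper bounds, note that $(1+\sqrt{2(1-a_1)})^2\to 1$ and $4(1+a_4)^2\to 16$, so the RHS of \eqref{lambda_glm} converges to $\min\{c\dt/(16(\bar t-t)),\;c\delta,\;\zeta_{a_4}^2\bmin^2/16\}$. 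The asymptotic identifiability assumption asserts that $2\sigma^2\log p/c$ is asymptotically strictly smaller than this minimum; combined with $\lambda^2=c^{-1}(2\sigma^2\log p)(1+o(1))$, this yields \eqref{lambda_glm} for all $n$ sufficiently large.

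Finally, Theorem \ref{th2} gives $\mathbb{P}(\hat T_{SS}\neq T)\le 4.5\exp(-E_n)$ with
\[
E_n=\frac{a_2(1-a_1)c\lambda^2}{2\sigma^2}=\frac{(1-a_1)\log p}{a_1a_3}\sim\sqrt{\tfrac{t\log p}{2}}\longrightarrow\infty,
\]
so the right-hand side is $o(1)$. The main obstacle I anticipate is the bookkeeping around the identifiability condition: one must match the precise constants $(1+\sqrt{2(1-a_1)})^{-2}$ and $1/(4(1+a_4)^2)$ appearing in \eqref{lambda_glm} with the cleaner constants $1$ and $1/16$ appearing in the corollary, and then confirm that the strict $\varlimsup<1$ provides enough slack to absorb the $(1+o(1))$ factors arising from $\lambda^2$ and from the convergence of $a_1,a_2,a_3,a_4$ to $1$.
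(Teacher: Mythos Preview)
Your proposal is correct and follows exactly the route the paper intends: the paper states only that ``Selection consistency \ldots\ now easily follows'' from Theorem~\ref{th2}, and your argument is precisely the natural unpacking of that remark---check $a_1,a_2,a_3,a_4\to 1$ to get the asymptotic form of $\lambda^2$, verify both sides of \eqref{lambda_glm} (the lower bound in $t$ by the specific choice of $a_1$, the upper bound via the identifiability hypothesis absorbing the $o(1)$ terms), and compute that the exponent $a_2(1-a_1)c\lambda^2/(2\sigma^2)=(1-a_1)\log p/(a_1a_3)\sim\sqrt{t\log p/2}\to\infty$. The only cosmetic point is that your claim ``$a_1a_2a_3\in(0,1)$ on $a_1\in(1/2,1)$'' is stronger than needed and not obviously true on the whole interval; it suffices that $a_1a_2a_3\in(0,1)$ for $a_1$ close to $1$, which is immediate from $a_i\to 1$ and is all the argument requires.
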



\begin{remark}
\label{safest_choice}
\rm{
Theorem~2 determines conditions on GLM and the SS algorithm for which the bound 
(\ref{main2}) on the selection error of SS holds.
Corollary~1 states the easy interpretable result: if the true model is 
asymptotically identifiable, then SS with minimal
admissible $\lambda$ is asymptotically consistent. Although the identifiability 
condition is not effectively verifiable,
$\lambda$ can be explicitly given for linear models as
\begin{equation}
  \label{star1}
\lambda=\sqrt{2\sigma^2\log p}(1+ o(1))
\end{equation}
and for logistic models as
\begin{equation}
  \label{star2}
  \lambda=\sqrt{(\log p)/(2c)}(1+ o(1)),
\end{equation}
since $\sigma \geq 1/2$.
Let us consider subgaussian linear models and assume  that $\sigma^2$ is known, which   is a common condition in the literature investigating theoretical properties of variable selection procedures \cite{YeZhang10, BuhlmannGeer11,FanEtAl14}. 
Then  the parameter $\lambda$ of SS is given constructively provided that
$t=o(\log p)$. In  contrast, TL or FCP are not constructive, because they require an additional parameter $\tau$, that
depends on unknown identifiability constants as SCIF.

In the literature concerning the Lasso and 
its modifications the smallest possible $\lambda$ is taken
as the default value, because it makes the algorithm  asymptotically consistent 
for the largest class of models (the same approach is adopted for
prediction and estimation). Such $\lambda$ will be called the {\it safest 
choice}.
It is interesting that for linear models GIC with $\lambda$ given by (\ref{star1}) was originally derived from the minimax  perspective 
by Foster and George \cite{FosterGeorge1994}. They called such selection the risk inflation criterion (RIC), because it asymptotically minimises 
the maximum predictive risk with respect to the oracle for the orthogonal matrix of experiment $X$.
}
\end{remark}

\begin{remark}
\rm{
A generic combination of the penalized log-likelihood (as TL or FCP) with 
GIC is considered in \cite{FanTang2013}.
In the first step the method computes a path of models indexed by 
$\lambda$ and next GIC is used to choose the final model.
They assume that the true model has to be on this path and use GIC with the
penalty asymptotically larger than $\log p$.
Thus, their results are weaker and need more restrictive assumptions, which are 
given in \cite[Section 6]{FanTang2013}.
For instance, if the cumulant function $\gamma,$ defined in 
\eqref{inverseLink}, has uniformly bounded second derivative,
then we do not require its third derivative in contrast to 
\cite{FanTang2013}. Moreover, using Corollary \ref{cor2}
and assuming that the number $c$ is constant as in \cite{FanTang2013}, 
the last step of our algorithm uses GIC with
the safest choice $\lambda$ instead of $K \log p$ as in 
\cite{FanTang2013} for some $K \rightarrow \infty$ with  $n \rightarrow 
\infty$.
It is worth  to note that their results are obtained using the empirical 
processes theory, while the proof of Theorem \ref{th2} is based on
elementary exponential inequalities for subgaussian variables given in 
subsection~\ref{subsec:subgaussian}.
}
\end{remark}

\subsection{A Selection Error Bound for SS in Subgaussian Linear Models}

In this part of the paper we show that SS is constructive for the linear 
model  with the subgaussian noise.
The main difference between Theorem \ref{th2} and the following Theorem 
\ref{th_LM} is that the lower bound on $\lambda^2$ in Theorem \ref{th_LM}
does not depend on the dimension of the true model $t$ and the parameter $c$ (because  $c=1$ 
for subgaussian linear models).

\begin{theorem}
\label{th_LM}
Consider the linear model with the subgaussian noise. Assume that there 
exists $0<a \leq \sqrt{1-(1+\log(2\log p))/(2\log p)}$   such that
  $$\frac{2\sigma^2\log p}{a^4 }\leq \lambda^2 < \frac{\zeta_{a}^2\bmin 
^{2} \wedge \delta}{4(1+a)^{2}}.$$ Then
  \begin{equation}
  \label{mainLM}
  P(\hat T_{SS}\neq T)\leq 
5\exp\bigg(-\frac{a^2(1-a^2)\lambda^2}{2\sigma^2}\bigg).
  \end{equation}
  \end{theorem}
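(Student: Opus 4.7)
The plan is to partition the selection error event $\{\hat{T}_{SS}\neq T\}$ into a screening failure (the Lasso fails to separate true from null coordinates) and a selection failure (GIC picks the wrong element of the nested family $\mathcal{J}$ on the separation event), and to bound each contribution by an exponential of the required form. Introduce the good event $\mathcal{G}=\{|X^T\varepsilon|_\infty\le a\lambda\}$. On $\mathcal{G}$, Lemma~\ref{LassoBound} with $q=\infty$ gives $|\hat\beta-\mathring\beta|_\infty\le(1+a)\lambda/\zeta_a$, and the hypothesis $\lambda<\zeta_a\bmin/[2(1+a)]$ then forces $\min_{j\in T}|\hat\beta_j|>\max_{j\in\bar T}|\hat\beta_j|$, so that $T$ is exactly the $t$-th element of $\mathcal{J}$. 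A standard subgaussian tail together with a union bound over the $p$ coordinates of $X^T\varepsilon$ yields $\mathbb{P}(\mathcal{G}^c)\le 2p\exp(-a^2\lambda^2/(2\sigma^2))$, and the assumption $\lambda^2\ge 2\sigma^2\log p/a^4$ lets us absorb the prefactor $p$ into the exponent to obtain a bound of the form $2\exp(-a^2(1-a^2)\lambda^2/(2\sigma^2))$.

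Conditionally on $\mathcal{G}$, write $J_k$ for the $k$-th element of $\mathcal{J}$, so $J_t=T$, $J_k\subsetneq T$ for $k<t$, and $J_k\supsetneq T$ for $k>t$. In the linear model $\ell(\beta)=\tfrac12\|y-X\beta\|^2$ up to a constant, hence $\ell_J=\tfrac12\|(I-H_J)y\|^2$. For overselection with $|J|=t+k>t$, the identity $H_JX_T=X_T$ yields $\ell_T-\ell_J=\tfrac12\|(H_J-H_T)\varepsilon\|^2$, so the GIC inequality $\ell_J+\lambda^2|J|/2<\ell_T+\lambda^2 t/2$ is equivalent to $\|(H_J-H_T)\varepsilon\|^2>\lambda^2 k$. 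For underselection with $|J|=t-k<t$, the orthogonal decomposition $H_T=H_J+(H_T-H_J)$ gives $\ell_J-\ell_T=\tfrac12\|(H_T-H_J)y\|^2$, and substituting $y=X_T\mathring\beta_T+\varepsilon$ and applying the reverse triangle inequality together with $\|(I-H_J)X_T\mathring\beta_T\|^2\ge k\delta$ (from \eqref{delta_k} and \eqref{delta}) reduces the GIC inequality to $\|(H_T-H_J)\varepsilon\|^2>k(\sqrt{\delta}-\lambda)^2$. The constraint $\lambda^2<\delta/[4(1+a)^2]$ implies $(\sqrt{\delta}-\lambda)^2>(1+2a)^2\lambda^2$, so the underselection threshold is strictly larger than the overselection threshold.

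Both GIC failure modes are thereby reduced to controlling $\|P\varepsilon\|^2$, where $P$ is an orthogonal projection of rank $k$, against a threshold proportional to $\lambda^2 k$. A chi-square-type tail inequality for subgaussian quadratic forms of the form $\mathbb{P}(\|P\varepsilon\|^2>c\lambda^2 k)\le\exp(-c'\lambda^2 k/\sigma^2)$, followed by a union bound over $\binom{p-t}{k}\le p^k$ supersets (overselection) and $\binom{t}{k}$ submodels (underselection), and summed over $k\ge1$, produces geometric series of the form $\sum_k p^k\exp(-c'\lambda^2 k/\sigma^2)$. The hypothesis $a^2\le 1-(1+\log(2\log p))/(2\log p)$ is precisely what makes these sums $O(1)$ while retaining the exponent $a^2(1-a^2)\lambda^2/(2\sigma^2)$, and adding the three contributions (screening, overselection, underselection) yields the prefactor $5$ in \eqref{mainLM}. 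The main obstacle is the overselection step: the combinatorial factor $p^k$ must be absorbed by the gap between the screening exponent $a^2\lambda^2/(2\sigma^2)$ and the final exponent $a^2(1-a^2)\lambda^2/(2\sigma^2)$, and calibrating the technical constant $a$ via the stated condition is exactly what makes this estimate uniform in $k$.
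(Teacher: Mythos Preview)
Your proposal is correct and follows the same three-part decomposition (screening failure via $\{|X^T\varepsilon|_\infty>a\lambda\}$, GIC underselection, GIC overselection) as the paper, with identical treatment of the screening and overselection parts. The one methodological difference is in the underselection step: you apply the reverse triangle inequality to $\|(H_T-H_J)y\|$ and reduce to the quadratic-form tail $\|(H_T-H_J)\varepsilon\|^2>k(\sqrt{\delta}-\lambda)^2$, to be controlled by Lemma~\ref{lemSubG}(iii); the paper instead expands $\|(H_T-H_J)y\|^2=\|\mu_{T\ominus J}\|^2+2\varepsilon^T\mu_{T\ominus J}+\|(H_T-H_J)\varepsilon\|^2$, drops the nonnegative quadratic term, and reduces to the \emph{linear} tail $-\varepsilon^T\mu_{T\ominus J}/\|\mu_{T\ominus J}\|\ge \tfrac12\sqrt{k\delta}(1-\lambda^2/\delta)$, controlled by Lemma~\ref{lemSubG}(i). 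Both routes deliver the per-subset exponent $a^2\lambda^2/(2\sigma^2)$ under the stated hypotheses and combine with the union bounds to give the prefactor $2+2/\log 2<5$; the paper's linear-form argument is slightly sharper (it does not need the $\chi^2$-tail condition $a^2\le f_2(2\log p)$ for the submodel part), while your route has the virtue of treating sub- and supermodels symmetrically through the same quadratic-form inequality.
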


The safest choice of $\lambda$ in Theorem \ref{th_LM} does not depend on 
unknown expressions, which justifies the claim that our algorithm is 
constructive
in the linear model with the subgaussian noise. Next, we compare the above 
result to \citet{WangEtAl13} and \citet{FanEtAl14}.

\begin{remark}
\rm{
The algorithm in \citet{WangEtAl13} has three steps: the Lasso, non-convex 
penalized linear regression and GIC with the parameter $C \log p,$
where $C \rightarrow \infty$ with  $n \rightarrow \infty$. 
Obviously, the SS algorithm is computationally faster, because it does not need the most 
time-consuming second step. Moreover, the first two steps of
their algorithm form a variant of FCP, so their parameters are not given constructively as we explain in the discussion after Theorem \ref{th1}. Finally, their assumptions are stronger than ours. Indeed, conditions
in \citet[Theorem 3.2, Theorem 
3.5]{WangEtAl13}   lead to $n/(\delta t) =O(1)$ and  $t \log p/n= o(1),$ where $\delta$ is defined in \eqref{delta}. From these two facts and \citet[expression (3.4)]{WangEtAl13} we obtain 
\begin{equation}
\label{wang}
\frac{\log p}{\delta} =o(1) \quad {\rm and } \quad \frac{t \sqrt{\log p}}{\bmin}=o(1).
\end{equation}
If we fix  $\sigma ^2$ and $\zeta_{a}^2$ as in \citet[expression (3.4)]{WangEtAl13}, then  in Theorem \ref{th_LM} we require only that
\begin{equation*}
\frac{\log p}{\delta} =O(1) \quad {\rm and } \quad \frac{\sqrt{\log p}}{\bmin}=O(1).
\end{equation*}
In \citet[Theorem 3.6]{WangEtAl13}  the second condition in \eqref{wang} is weakened at the price of stronger conditions on the the design matrix. However, their  assumptions are still 
more restrictive assumptions
 than ours.
}
\end{remark}

\begin{remark}
\rm{
\citet[Subsection 3.1]{FanEtAl14} consider an algorithm based on non-convex penalization in linear models. 
Their procedure is model selection consistent, but is also  not 
constructive.
Indeed, in \citet[Corollary 3]{FanEtAl14} the parameter $\lambda$ has to 
be in the interval, whose both endpoints depends on unknown
$||\beta_{\mathcal{A}}^*||_{min},\sqrt{s}, \kappa_{linear}$ that are 
analogs of $\bmin , t, \xi_{a_1}$ in the current paper.
In \citet[Remark 5]{FanEtAl14} it is shown that $\sqrt{s}$  can be 
avoided, but their algorithm is still not constructive in contrast to the 
SS algorithm.
}
\end{remark}

\subsection{Exponential Bounds for Subgaussian Vectors}
\label{subsec:subgaussian}

This part of the paper is devoted to  exponential inequalities for subgaussian random vectors. They are interesting by themselves and can be  used in different problems than we consider. In the current paper they are main probabilistic tools that are needed to prove Theorems \ref{th1}-\ref{th_LM}.
 Specifically, in lemma~\ref{lemSubG}~(iii) we generalize the Wallace inequality for 
$\chi^2$ distribution \cite{Wallace59} to the subgaussian case using the inequality for the moment generating function in lemma~\ref{lemSubG}~(ii). 
The last inequality is proved by the decoupling technique as in the proof of Theorem~2.1 in \cite{HsuEtAl12}.

\begin{lemma}
 \label{lemSubG}
Let $\varepsilon \in \mathbb{R}^n$ be a vector of zero-mean independent errors having subgaussian distribution with 
a constant $\sigma$,  $\nu \in \mathbb{R}^n$, $0<a<1$ and $H$ be an orthogonal projection such that $tr(H)=m$. Then 
\begin{itemize}

\item [(i)] for $\tau>0$
 \begin{equation}
\label{lin_ineq} \mathbb{P}(\varepsilon^T\nu/||\nu|| \geq \tau) \leq \exp\left(-\frac{\tau^2}{2\sigma^2}\right),
\end{equation} 

\item [(ii)] $$\mathbb{E}\exp\left(\frac{a}{2\sigma^2}\varepsilon^T H\varepsilon\right) \leq \exp\left(-\frac{m}{2}\log(1-a)\right),$$

\item [(iii)] for $\tau>1$ 
\begin{equation}
\label{quadr_ineq}
 \mathbb{P}(\varepsilon^T H\varepsilon \geq m\sigma^2\tau) 
\leq \exp\left(-\frac{m}{2}\big(\tau-1-\log\tau\big)\right).
\end{equation}
\end{itemize}
\end{lemma}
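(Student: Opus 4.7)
The three parts will be handled in sequence, with part (ii) being the key quadratic-form MGF estimate and parts (i), (iii) following by standard Chernoff arguments.

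Part (i) is immediate: the variable $\varepsilon^T\nu/\|\nu\|$ is a linear combination $\sum_i (\nu_i/\|\nu\|)\varepsilon_i$ of independent subgaussians whose squared coefficients sum to $1$, so the product structure together with assumption (\ref{subgauss}) gives $\mathbb{E}\exp(u\varepsilon^T\nu/\|\nu\|)\le\exp(\sigma^2u^2/2)$. Optimizing the resulting Markov bound at $u=\tau/\sigma^2$ yields (\ref{lin_ineq}).

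For part (ii), I plan to use Gaussian decoupling as in Hsu et al. Write $H=UU^T$ with $U\in\mathbb{R}^{n\times m}$ having orthonormal columns, so that $\varepsilon^TH\varepsilon=\|U^T\varepsilon\|^2$. Introduce an auxiliary $g\sim N(0,I_m)$ independent of $\varepsilon$. Using the Gaussian identity $\mathbb{E}_g\exp(\alpha g^Tv)=\exp(\alpha^2\|v\|^2/2)$ and Fubini,
\[
\mathbb{E}\exp\!\Bigl(\tfrac{\alpha^2}{2}\|U^T\varepsilon\|^2\Bigr)
=\mathbb{E}_g\,\mathbb{E}_\varepsilon \exp\!\bigl(\alpha(Ug)^T\varepsilon\bigr).
\]
Conditional on $g$, the inner expectation is the MGF of a fixed linear combination of independent subgaussians, so (\ref{subgauss}) gives $\mathbb{E}_\varepsilon\exp(\alpha(Ug)^T\varepsilon)\le\exp(\sigma^2\alpha^2\|Ug\|^2/2)=\exp(\sigma^2\alpha^2\|g\|^2/2)$, where the last equality uses $U^TU=I_m$. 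Averaging over $g$, since $\|g\|^2\sim\chi^2_m$ with MGF $(1-2s)^{-m/2}$ valid for $s<1/2$, yields $(1-\sigma^2\alpha^2)^{-m/2}$ whenever $\sigma^2\alpha^2<1$. Setting $\alpha^2=a/\sigma^2$ with $a\in(0,1)$ delivers (ii) upon identifying $-(m/2)\log(1-a)$ with the logarithm of this factor.

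Part (iii) is a Chernoff bound applied to (ii). For any $a\in(0,1)$ and $\tau>1$,
\[
\mathbb{P}\bigl(\varepsilon^TH\varepsilon\ge m\sigma^2\tau\bigr)
\le \exp(-am\tau/2)\,\mathbb{E}\exp\!\Bigl(\tfrac{a}{2\sigma^2}\varepsilon^TH\varepsilon\Bigr)
\le \exp\!\Bigl(-\tfrac{m}{2}\bigl(a\tau+\log(1-a)\bigr)\Bigr).
\]
Maximizing the exponent in $a$ gives the first-order condition $\tau=1/(1-a)$, i.e.\ $a=1-1/\tau$, which lies in $(0,1)$ precisely because $\tau>1$. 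Substitution yields $a\tau=\tau-1$ and $\log(1-a)=-\log\tau$, producing (\ref{quadr_ineq}).

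The only step requiring genuine care is the decoupling in (ii): it is the single point where the subgaussian hypothesis is crossed with a Gaussian auxiliary, and one must verify that Fubini applies (which it does since all integrands are non-negative) and that the subgaussian MGF bound is invoked only after conditioning on $g$, so that the weights $(Ug)_i$ are deterministic. Everything else reduces to routine Chernoff optimization and the $\chi^2_m$ MGF computation.
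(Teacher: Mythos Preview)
Your proof is correct and follows essentially the same route as the paper's: both parts (i) and (iii) are identical Chernoff optimizations, and for (ii) both you and the paper use the Gaussian decoupling trick of Hsu--Kakade--Zhang to linearize the quadratic form, then apply the subgaussian MGF bound conditionally and finish with the $\chi^2_m$ MGF. The only cosmetic difference is that the paper introduces an $n$-dimensional Gaussian $\xi\sim N(0,I_n)$ and works with $\xi^T H\varepsilon$, whereas you factor $H=UU^T$ and use an $m$-dimensional auxiliary $g\sim N(0,I_m)$; the two are equivalent since $H\xi$ and $Ug$ have the same distribution on the range of $H$.
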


\section{Extension to General Convex Contrasts}
\label{convex}

In this part of the paper we investigate properties of  the SS algorithm beyond GLM as well. The main assumption, that will be required,  is convexity of the ,,contrast'' function. 
We show that the SS algorithm is  very flexible procedure that can  be  applied succesfully to the various spectrum of practical problems. 

 First, 
for $\beta \in \Rp$ and a contrast function $\phi:\R \times \R \rightarrow \R$ we define a loss
function  
$$
\ell(\beta) =  \sum_{i=1}^n \phi(\beta ^T x_i, y_i).
$$
Considering the normal linear model one usually uses the quadratic contrast $$\phi(\beta ^T x_i, y_i)=(y_i-\beta ^T x_i)^2$$ as we have done in Section \ref{sec:models}. However, it is well known that the quadratic contrast is very sensitive to the distribution of errors $\varepsilon _i$ and does not work well, if this distribution is heavy-tailed and outliers appear. To overcome this difficulty we can use the absolute contrast $$\phi(\beta ^T x_i, y_i)=|y_i-\beta ^T x_i|.$$ Next, working with dichotomous $y_i$ we can apply  logistic regression that belongs to GLM and has been considered in Section \ref{sec:models}. In this case we have $$\phi(\beta ^Tx_i, y_i) = -y_i\beta ^T x_i + 
\log [1+\exp(\beta ^T x_i)].$$ But there are also very popular and efficient algorithms called  support vector machines (SVM) that use, for instance, the following quadratic hinge  contrast $$\phi(\beta ^Tx_i, y_i)=[\max(0,1-y_i\beta ^T x_i)]^2.$$ 
Our main assumption is that the contrast function $\phi$ is convex with respect to $\beta.$ All examples given  above  satisfy this property. Notice that they need not  be differentiable nor decompose, as in \eqref{loss} for GLM, into the sum of the nonrandom cumulant $\gamma$  and the random linear term $y_i \beta ^T x_i$. The SS algorithm for convex contrasts is the same as in Algorithm 1.

We add few definitions and notations to those in the previous parts of the paper. 
 We start with defining two balls: the first one is the $l_1$-ball
$B_1(r) =\{\beta: |\beta - \bo|_1 \leq r \}$ with radius $r>0.$ The second one is the $l_2$-ball
$
B_{2,J}(r) =\{\beta_J: ||X_J (\bo-\beta_J)||^2 \leq r^2\} 
$
with radius $r>0,$ where $J$ is a (sparse) subset of  $\{1, \ldots,p\}$ such that $T \subset J, r(X_J)=|J|\leq \bar{t}.$ Recall that $\bo$ is, as previously, a minimizer of  $\Ex \ell(\beta).$
Besides, let $B_J=B_{2,J} \left(\sqrt{\dt }\right),$ where $\dt $ is defined in \eqref{delta_k}. 
In further argumentation  key roles are played by: 
$$
Z(r) = \sup_{\beta \in B_1(r)} \left|\ell(\beta)- \Ex \ell(\beta) -[\ell(\bo)-\Ex \ell(\bo)  ]\right| $$
and
$$
U_J(r) = \sup\limits_{\beta \in B_{2,J}(r)} \left|\ell(\beta)- \Ex \ell(\beta) -[\ell(\bo)-\Ex \ell(\bo)]  \right|,
$$
which are empirical processes over $l_1$ and $l_2$-balls, correspondingly. We need also the compatibility factor that is borrowed from \cite{BuhlmannGeer11} and is an analog of SCIF defined in \eqref{SCIF}. Namely,
for arbitrary $a \in  (0,1)$ a compatibility factor is 
\begin{equation}
\label{comp_fact}
\kappa_a = \inf_{0 \neq \beta \in \mathcal{C}_a}  \frac{ \beta ^T X^TX \beta }{|\beta_{T}|_1^2} \: ,
\end{equation}
where $\mathcal{C}_a$ is a simplified version of  \eqref{cone}, namely 
$$
 {\cal C}_a=\bigg\{\nu \in \mathbb{R}^p: |\nu_{\bar T}|_1\leq \frac{1+a}{1-a}|\nu_{ T}|_1 \bigg\}.
 $$

Convexity of the contrast function is the main assumption in this section. However, similarly to the previous section we need also the following {\it strong convexity} of $\Ex \ell(\beta)$
 at $\b*$:
there exists $c_1 \in (0,1]$  ($c_2 \in (0,1]$, respectively) such that for each $\beta_1 \in B_1(\bmin)$ ($\beta_2 \in \mathbb{B}$ defined in \eqref{ball}, respectively) we have for $i=1,2$
\begin{equation}
\label{strong_c}
\Ex \ell(\beta_i) -\Ex \ell(\b*) \geq \frac{c_i}{2} (\beta_i - \b*) ^T \, X^TX \, (\beta_i - \b*) .
\end{equation}
Notice that in \eqref{strong_c} we  require  the expected loss $ \Ex \ell(\beta),$ not the loss $\ell(\beta),$ to be strongly convex. 
Therefore, the condition \eqref{strong_c} can be  satisfied  easily even if the contrast function $\phi$ is not differentiable, for instance for  absolute or quadratic hinge contrasts (see Remark \ref{32}). 
For GLM in section \ref{sec:models} the condition \eqref{convexLoss} is equivalent to \eqref{strong_c} for $i=2,$ that will be explained in Remark \ref{comp_similar}.

 To prove exponential bounds for GLM in subsection \ref{selection_error_GLM}  we use subgaussianity that allows us to obtain probabilistic inequalities in Lemma \ref{lemSubG}.  In this section we need the analog of \eqref{lin_ineq} of the form: there exists $L>0$ and constants $K_1,K_2>0$ such that for each $0<r\leq \bmin$ and $z \geq 1$ 
we have 
\begin{equation}
\label{ZM}
P\left( \frac{Z(r)}{r} > K_1  L  z \sqrt{ \log (2p)} 
\right)\leq \exp\left(- K_2 \log (2p) \:z^2 \right).
\end{equation}
Besides, the inequality \eqref{quadr_ineq} is replaced by the following: there exists $L>0$ and constants $K_3,K_4>0$ such that for each $0<r\leq \sqrt{\dt} ,\;z\geq 1$ and $J$ such that $T \subset J, r(X_J)=|J|\leq \bar{t}$
we have 
\begin{equation}
\label{UJ}
P\left( \frac{U_J(r)}{r} >K_3L  z \sqrt{|J|}
\right)\leq \exp(-K_4 |J|z^2).
\end{equation}
The detailed comparison between assumptions and results for models in this section and those for GLM in Theorem \ref{th2} is given in Remarks \ref{comp_similar} and \ref{comp_diff} after the main result of this section, which is  now stated. 

\begin{theorem}
\label{main_emp}
Fix $a_1,a_2 \in (0,1)$ and let $K_i$ be universal constants. Assume  that \eqref{strong_c}, \eqref{ZM}, \eqref{UJ} and
\begin{eqnarray}
\label{lambda_assum}
&\,& K_1 \max\left(\frac{\log p}{a_1^2},\frac{ \log p}{c_2}, \frac{t}{a_2c_2}\right)  L^2  \leq \lambda^2 \leq \\
\label{lambda_assum2}
&\leq&  K_2\min \left[
\frac{c_2 \delta}{(1+\sqrt{2a_2})^2} , \frac{c_2 \dt }{\bar{t} -t}
, (1-a_1)^2 c_1^2 \kappa^2_{a_1} \bmin ^2 \right].
\end{eqnarray}
Then 
\begin{equation}
\label{claim}
P\left( \hat{T}_{SS} \neq T 
\right) \leq K_3 \exp\left[- \frac{K_4 \lambda^2}{L^2}\min\left(a_1^2 ,
  a_2  c_2 \right)\right].
\end{equation}
\end{theorem}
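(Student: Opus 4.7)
The plan is to decompose the error event $\{\hat{T}_{SS}\neq T\}$ into a \emph{screening failure} (the true support $T$ is not contained in the nested family $\mathcal{J}$ produced by the first step) and a \emph{selection failure} (GIC picks some $J\in\mathcal{J}\setminus\{T\}$ even though $T\in\mathcal{J}$). High-probability good events will be built using \eqref{ZM} at a radius matched to the Lasso $\ell_1$-error and \eqref{UJ} applied to each supermodel $J\supset T$ of size $|J|\leq\bar t$ at radius $\sqrt{\dt}$. With $z\asymp\lambda/L$ in both inequalities the complementary probabilities are of the desired order, and a union bound over at most $\bar t$ candidates in $\mathcal{J}$ is absorbed into the constant $K_3$ of \eqref{claim}; the exponent $\min(a_1^2,a_2c_2)$ reflects the worst of the three tail rates that enter.

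For the screening step I would start from the basic Lasso inequality $\ell(\hbeta)+\lambda|\hbeta|_1\leq\ell(\bo)+\lambda|\bo|_1$ and use the deviation bound \eqref{ZM} to place $\hbeta-\bo$ in the cone $\mathcal{C}_{a_1}$. The compatibility factor $\kappa_{a_1}$ of \eqref{comp_fact} together with strong convexity \eqref{strong_c} at $\bo$ on $B_1(\bmin)$ (with constant $c_1$) then yields a sufficiently small bound on $|\hbeta-\bo|_\infty$; under the upper bound $\lambda^2\leq K_2(1-a_1)^2c_1^2\kappa_{a_1}^2\bmin^2$ from \eqref{lambda_assum2} this forces $|\hbeta_j|>\bmin/2>|\hbeta_k|$ for every $j\in T$ and $k\notin T$. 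Consequently the first $t$ entries in the ordering of $|\hbeta|$ are exactly those of $T$ and $T\in\mathcal{J}$.

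For the selection step, conditional on $T\in\mathcal{J}$, the target is $\ell_J+\lambda^2|J|/2>\ell_T+\lambda^2 t/2$ for every $J\in\mathcal{J}\setminus\{T\}$. For a supermodel $J\supsetneq T$ with $|J|\leq\bar t$ one has $\ell_T\geq\ell_J$; the upper bound $\lambda^2\leq K_2c_2\dt/(\bar t-t)$ together with \eqref{strong_c} on $\mathbb{B}$ confines $\tbj$ to $B_J=B_{2,J}(\sqrt{\dt})$, so \eqref{UJ} at radius $\sqrt{\dt}$ and a standard quadratic inequality yield $\ell_T-\ell_J\lesssim L^2|J|/c_2$, which is strictly smaller than $\lambda^2(|J|-t)/2$ thanks to $\lambda^2\gtrsim L^2\log p/c_2$. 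For a submodel $J\subsetneq T$ the projection identity $\|(I-H_J)X_T\bo_T\|^2\geq\delta_{t-|J|}$ combined with strong convexity gives $\Ex\ell(\tbj)-\Ex\ell(\bo)\gtrsim c_2\delta(t-|J|)/2$; controlling the two remaining deviations via \eqref{ZM} and invoking $\lambda^2\leq K_2c_2\delta/(1+\sqrt{2a_2})^2$ then delivers $\ell_J-\ell_T>\lambda^2(t-|J|)/2$.

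The main obstacle will be the submodel step, because the postulated strong convexity \eqref{strong_c} holds only on $B_1(\bmin)$ and on the supermodel region $\mathbb{B}$, neither of which contains $\tbj$ once $|T\setminus J|\geq 2$. I would circumvent this by passing through the minimizer $\beta^\ast_J$ of $\Ex\ell$ over supports contained in $J$, for which the projection identity directly gives $\|X(\beta^\ast_J-\bo)\|^2\geq\delta_{t-|J|}$, and by using convexity of $\Ex\ell$ along a segment from $\bo$ to $\beta^\ast_J$ that enters $B_1(\bmin)$ to transfer the strong-convexity estimate back to $\tbj$. The remainder is bookkeeping: balancing the three branches of \eqref{lambda_assum} against the three branches of \eqref{lambda_assum2} so that every pairwise GIC comparison holds with margin at least the exponent in \eqref{claim}, and collecting the tail probabilities from \eqref{ZM} and \eqref{UJ} into the claimed form.
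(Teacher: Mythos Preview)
Your overall decomposition and the screening argument are essentially the paper's approach: control $\{T\notin\mathcal J\}$ via \eqref{ZM} on the Lasso good event, then split $\{T\in\mathcal J,\hat T\neq T\}$ into sub- and supermodels and handle each with \eqref{UJ}. However, two steps in your proposal would not go through as written.

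\medskip
\textbf{Supermodels.} Applying \eqref{UJ} at the fixed radius $\sqrt{\dt}$ and taking a union over ``at most $\bar t$ candidates in $\mathcal J$'' does not give the right exponent. First, the supersets in $\mathcal J$ are \emph{random}; you cannot simply fix them and apply \eqref{UJ}. The paper instead unions over \emph{all} deterministic $J\supset T$ with $|J|\leq\bar t$, which introduces a factor $\binom{p-t}{m}$ for each $m=|J\setminus T|$. Second, and more importantly, the radius must be chosen as $r=2\lambda\sqrt{m/c_2}$ (not $\sqrt{\dt}$). With this choice one shows $\{\ell_T-\ell_J\geq m\lambda^2/2\}\subset\{U_J(r)>m\lambda^2/2\}$ (using the van~de~Geer convexity trick when $\hat\beta_J^{ML}\notin B_{2,J}(r)$), and \eqref{UJ} with $z\asymp\lambda\sqrt{mc_2}/(L\sqrt{|J|})$ yields an exponent proportional to $m c_2\lambda^2/L^2$. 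The factor $m$ in the exponent is what kills the combinatorial $\binom{p-t}{m}$. With your radius $\sqrt{\dt}$ the exponent for $m=1$ becomes of order $\lambda^4/(L^2\dt)$, which under the upper bound $\lambda^2\leq K_2c_2\dt/(\bar t-t)$ is \emph{smaller} than the target $c_2\lambda^2/L^2$ unless $\bar t-t$ is bounded; the argument breaks.

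\medskip
\textbf{Submodels.} The obstacle you flag is not real, and your workaround via the population minimizer $\beta_J^\ast$ (and a ``projection identity'' that holds only for the quadratic loss) is unnecessary. The paper never applies \eqref{strong_c} at $\hat\beta_J^{ML}$. Instead it evaluates \eqref{strong_c} on the sphere $\partial B_{2,T}(\sqrt{\delta_k})$, which \emph{is} contained in $\mathbb B$ because $T\supset T$, $|T|\leq\bar t$ and $\delta_k\leq\dt$. This yields $\min_{\beta_T\in\partial B_{2,T}(\sqrt{\delta_k})}\mathring\ell(\beta_T)\geq c_2\delta_k/2-U_T(\sqrt{\delta_k})$, and then ordinary convexity of $\ell$ along the segment from $\mathring\beta_T$ to $\hat\beta_J^{ML}$ (which crosses that sphere since $\|X_T(\hat\beta_J^{ML}-\mathring\beta_T)\|^2\geq\delta_k$) transfers the lower bound to $\ell_J-\ell_T$. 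The deviation here is controlled by \eqref{UJ} with $J=T$, not by \eqref{ZM}; this is what produces the term $t/(a_2c_2)$ in \eqref{lambda_assum} and the $a_2c_2$ in the final exponent.
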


Theorem \ref{main_emp} bounds exponentially the selection error of the SS algorithm.
It extends Theorem \ref{th2} to the wide class of  convex contrast functions. In particular,
these contrasts can be nondifferentiable as in quantile regression or SVM. In Remarks  \ref{ZMUJ} and \ref{32} we discuss assumptions  \eqref{ZM}, \eqref{UJ} and \eqref{strong_c} of Theorem \ref{main_emp}, respectively. The detail comparison to Theorem \ref{th2} is given in Remarks \ref{comp_similar} and \ref{comp_diff}. There we argue that 
Theorem \ref{main_emp} applied to GLM is only slightly worse than Theorem
 \ref{th2}, which is devoted to GLM. 

\begin{remark} \rm{
\label{ZMUJ}
The important assumptions of Theorem \ref{main_emp} are  conditions \eqref{ZM} and \eqref{UJ}. 
They can be proved using tools from the empirical process theory such that concentration inequalities \citep{Massart2000}, the Symmetrization Lemma \citep[Lemma 2.3.1]{vw:96} and the Contraction Lemma \citep[Theorem 4.12]{ledtal:91}. It is quite remarkable that to get \eqref{ZM} or \eqref{UJ} we need only one new condition. Namely, we need  that the contrast function is Lipschitz in the following sense: there exists $L>0$ such that for all $x_i,y,0<r\leq \bmin$ and $\beta, \tilde{\beta} \in B_1(r)$ 
\begin{equation}
\label{Lip}
|\phi(\beta ^Tx_i,y) - \phi(\tilde{\beta} ^T x_i,y)| \leq L |\beta ^T x_i - \tilde{\beta} ^Tx_i|.
\end{equation} 
Indeed, \eqref{ZM} with $K_1=8 \sqrt{2}, K_2= 4$ follows from the above-mentioned tools and can be established as in  \cite[Lemma 14.20]{BuhlmannGeer11} combined with \cite[Theorem 9]{Massart2000}.
On the other hand, to get \eqref{UJ} with $K_3=8, K_4=2$ we need \eqref{Lip} to be satisfied for all
$x_i,y,0<r \leq \sqrt{\dt}, J:T \subset J, r(X_J)=|J|\leq \bar{t}$ and $\beta, \tilde{\beta} \in B_J.$
This fact can be obtained as in \cite[Lemma 14.19]{BuhlmannGeer11} combined again with \cite[Theorem 9]{Massart2000}.
Notice that logistic and absolute contrast functions satisfy \eqref{Lip} with $L=2$ and $L=1,$ respectively. The property \eqref{Lip} is also satisfied for the quadratic hinge contrast, but in this case $L$  depends on $n$.
}
\end{remark}

\begin{remark} \rm{
\label{32}
The condition \eqref{strong_c} is often called the ''margin condition'' in the literature. 
For quadratic and logistic contrasts we have  considered it in the previous section. To prove it for  SVM with the quadratic hinge contrast one can use methods based on the modulus of convexity \citep[Lemma 7]{bja:06}. For linear models with the absolute  contrast it can be established analogously to \cite[Lemma 3]{KimJeon2016}, if densities of noise variables are lower-bounded in a 
neighbourhood of the origin.}
\end{remark}

\begin{remark} Similarities to Theorem \ref{th2}.  \rm{  
\label{comp_similar}
 We compare  Theorem \ref{th2} to Theorem \ref{main_emp} applied to GLM. 
 We can calculate that for quadratic and logistic contrasts we have 
$$\ell(\beta) - \ell(\bo) = - (\beta-\b*) ^T X^Ty + g(\beta) - g(\b*)$$
and 
$$\Ex \ell(\beta) - \Ex \ell(\bo)  = - (\beta-\b*) ^T X^T \Ex y + g(\beta) - g(\b*),$$
where $g$ is a total cumulant function \eqref{total_cumulant}. Therefore, the condition \eqref{strong_c} for $l_2$-balls is the same as \eqref{convexLoss}.
Besides, we have for $\varepsilon = y- \Ex y$ that 
\begin{equation}
\label{vsGLM}
\ell(\beta) - \Ex \ell(\beta) -[\ell(\bo) - \Ex \ell(\bo)]=(\beta-\b*) ^T X^T\varepsilon.
\end{equation}
In Theorem \ref{th2} we supppose that $\varepsilon_1, \varepsilon_2, \ldots, \varepsilon_n$ are independent and subgaussian. These assumptions are used to establish \eqref{lin_ineq} and \eqref{quadr_ineq}, that are crucial in the proof of Theorem \ref{th2}.
Notice that \eqref{vsGLM} implies that for GLM we have $Z(r)/r \leq  \left|X^T \varepsilon\right|_\infty$ and $U_J(r)/r \leq  \sqrt{ \varepsilon ^T H_J \varepsilon}.$
Therefore, assumptions \eqref{ZM} and \eqref{UJ} are analogs of \eqref{lin_ineq} and \eqref{quadr_ineq}, respectively. 
 Moreover,  the  condition \eqref{lambda_assum} and the result in \eqref{claim}  
differ only in constants from their counteparts in Theorem \ref{th2}. }  
\end{remark}

\begin{remark} Differences from Theorem \ref{th2}.  \rm{
\label{comp_diff}
The SS algorithm consists of two steps. In the last paragraph we have clarified that the theoretical analysis of the second step (selection)  in
Theorem \ref{th2} is not significantly simpler than for general models with convex contrasts. However, we can find differences while investigating the first step (screening based on the lasso). 
Working with GLM we exploit  differentiability of contrasts and the useful decomposition \eqref{loss}. Due to that the right-hand side of \eqref{lambda_glm} is usually better than \eqref{lambda_assum2}, because in Theorem \ref{main_emp} we have to assume \eqref{strong_c} also   
with respect to $l_1$-balls and 
 $c_1^2$ appears in \eqref{lambda_assum2}. 
}
\end{remark}

\section{Experiments}
\label{experiments}

While convenient for theoretical analysis TL, FCP or SS algorithms use the Lasso estimators only for one value of the penalty, 
the practical Lasso implementations return coefficient estimators for a given net of it as in the R~package {\tt glmnet} described 
in \cite{FriedmanEtAl10}. Similarly, using a net of penalty values, the Minimax Concave Penalty (MCP) algorithm, a popular realization of FCP, 
has been implemented for linear  and logistic models in the R~packages {\tt SparseNet} \cite{MazumderEtAl11} and  {\tt ncvreg} \cite{ncvreg}), respectively.
Thus, we propose a net modification of the SS algorithm, which we call SSnet and state in Algorithm \ref{alg:SOSnet}. 
In the first step this procedure calculates the Lasso for a net of $m$ values of {\it an input grid}: $\lambda^L_1, \ldots, \lambda^L_m$.  Then the final model is chosen using GIC in a similar way  to the SS algorithm.

 We remind that for linear models $\lambda$ depends on the noise variance, which is unknown in practice. 
Estimation of $\sigma ^2$ is quite difficult, especially in the high-dimensional
scenario. Computing the Lasso for the whole net of tuning parameters is a simple way to overcome this problem. Obviously, in the second step of our procedure we use GIC, so we require an estimator of $\sigma ^2.$ But in this step we work with  models, whose dimensionality is significantly
reduced by the Lasso. Therefore, the classical estimator of $\sigma^2$, which uses residual
sum of squares, can be applied. 

\begin{algorithm}[h]
   \caption{SSnet}
   \label{alg:SOSnet}
\begin{algorithmic}
   \STATE {\bfseries Input:} $y$, $X$ and $({\rm input \; grid:}  \lambda_1^L < \ldots < \lambda_m^L, {\rm output \; grid:} \lambda_1 < \ldots < \lambda_r )$
   \STATE {\bfseries Screening} (Lasso)
   \FOR{$k=1$ {\bfseries to} $m$}
   \STATE {$\widehat{\beta}^{(k)} = \text{argmin}_{\beta} \left\{ \ell(\beta) + \lambda_k^L |\beta|_1 \right\}$};
   \STATE {order  nonzero $|\widehat{\beta}_{j_1}^{(k)}| \geq \ldots \geq |\widehat{\beta}_{j_{s_k}}^{(k)}|$, where $s_k = |\text{supp} \widehat{\beta}^{(k)}|$;}
 \STATE {set ${\cal J}_{k} = \left\{ \{j_1\},   \{j_1, j_2\}, \ldots, \{j_1, j_2, \ldots, j_{s_{k}}\} \right\}$}
\ENDFOR;

    \STATE {\bfseries Selection} (GIC)
    \STATE {${\cal J} = \bigcup_{k=1}^m   {\cal J}_{k}$ }; 
    \FOR{$k=1$ {\bfseries to} $r$}
     \STATE {$\widehat{T}_k = 
                     \text{argmin}_{J \in {\cal J}} \left\{ \ell_J + \lambda_k^2/2 |J| \right\}$};

\ENDFOR;
\STATE {\bfseries Output:} $\widehat{T}_1,\ldots,\widehat{T}_r.$

\end{algorithmic}
\end{algorithm}

We performed numerical experiments fitting sparse linear and logistic models to high-dimensional benchmark  simulations and real data sets. We investigated properties in model selection and prediction of the SSnet algorithm and its competitor, which was a net version of MCP. The fair comparison of these two net algorithms is difficult, because their input grids depend on these algorithm themselves  and their runs. Therefore, we decide to introduce {\it an output grid}, which is the same for both algorithms  $\lambda_1 < \ldots < \lambda_r$ in Algorithm~\ref{alg:SOSnet}. We compare the algorithms using graphs, which describe the interplay between a prediction error (PE)  and a model dimension (MD). In these graphs we show the averaged PE and MD over simulation runs (data sets) for distinct values from the output grid.  In particular, we want to find the minimum PE and the optimal penalty. Basing only on input grids, it would be more difficult to interpret  such averaging  as well as  to locate the PE minimum.
Finally, using such plots makes our procedure more {\it user-friendly}. Namely, observing the relation between dimensionality of a model and its prediction accuracy a user can decide, which estimator is best suited for both: describing the phenomena and his/her expectations.

\subsection{Simulated Data}


For linear models we studied the performance of two algorithms: SSnet and MCP  computed using the R package {\tt SparseNet} \cite{MazumderEtAl11} 
for the default 9 values of $\gamma$ and 50 values of $\lambda$. Our algorithm  used the R package {\tt glmnet} \cite{FriedmanEtAl10} 
to compute the Lasso estimators for 50 lambdas on a  log scale.

We generated  samples $(y_i, x_i)$, $i =1, \ldots, n$  from the normal linear model. Two vectors of parameters 
were considered: \mbox{$\mathring{\beta}^{(1)} = (3, 1.5, 0, 0, 2, 0_{p-5}^T)^T,$} as in  \cite{WangEtAl13} as well as
\mbox{$\mathring{\beta}^{(2)}= (0_{p-10}^T, s_1 \cdot 2, s_2 \cdot 2, \ldots, s_{10} \cdot 2)^T$}, where $s_l$ equals 1 or -1 with equal probability, 
$l = 1, \ldots,10$ chosen separately for every run as in experiment~2 in \cite{WangEtAl14}. 
The rows of $X$ were iid $p$-dimensional vectors $x_i \sim N(0_p, \Xi)$. We considered auto-regressive structure of covariance matrix that is 
$\Xi= \big(\rho^{|i-j|}\big)_{i,j = 1}^p$ for $\rho=0.5,0.7,0.9$. The columns of $X$ were centred and  normalized so that $||x_{\cdot j}||^2=n$ 
and $\varepsilon \sim N(0_n,\sigma^2I_n)$. The plan of experiments is presented in Table~\ref{T1} with SNR meaning a {\it Signal to Noise Ratio}.
\begin{table}[tb]
\centering
\caption{Plan of experiments for linear models.}
\label{T1}
\begin{small}
\begin{tabular}{lcccccccccc}
  \hline
 & n & p & $\mathring{\beta}$ & $\rho$  & $\sigma^2$ & SNR \\
\hline
N.1.5 & 100 & 3000 & $\mathring{\beta}^{(1)}$  & .5 & 4 & 2.3 \\
N.1.7 & 100 & 3000 & $\mathring{\beta}^{(1)}$  & .7 & 4 & 2.6 \\
N.1.9 & 100 & 3000 & $\mathring{\beta}^{(1)}$  & .9 & 4 & 3 \\
\hline
N.2.5 & 200 & 2000 & $\mathring{\beta}^{(2)}$  & .5 & 7 & 2.4 \\
N.2.7 & 200 & 2000 & $\mathring{\beta}^{(2)}$  & .7 & 7 & 2.3 \\
N.2.9 & 200 & 2000 & $\mathring{\beta}^{(2)}$  & .9 & 7 & 2.2 \\
 \hline
\end{tabular}
\end{small}
\end{table}
For every experiment the results were based on $N=1000$ simulation runs. 

The output grid was chosen as $\lambda_k^2= c_k \cdot  \log(p) \cdot \sigma^2, k=1,\ldots,r,$ where $(c_k)_{k=1}^r = (.25,.5, \ldots, 7.5).$  We reported the mean model dimension (MD) that is 
$|\widehat{T_k}|, k=1,\ldots,r$ and 
the mean squared  prediction error (PE) on new data set with 1000 observations equalling ${\|X\mathring{\beta} - X\widehat{\beta}_{\widehat{T_k}} \|^2}/({n \sigma^2})$, where  
$\widehat{\beta}_{\widehat{T_k}}$ is the post-selection OLS estimator. Values of $(MD(k),PE(k))$ for the models chosen by GIC=GIC($\lambda_k$) were calculated 
and averaged over simulations.

The results are presented in two first columns of Figure~\ref{wyn_1}. 
The two vertical lines indicate models chosen using GIC with $c_k = 2.5$: the black one for SSnet and the red one for 
{\tt SparseNet}. The  blue vertical line denotes the true model dimension.
\begin{figure}[htb]
\caption{Results for simulated data}\label{wyn_1}
\centerline{%
\includegraphics[width=120mm]{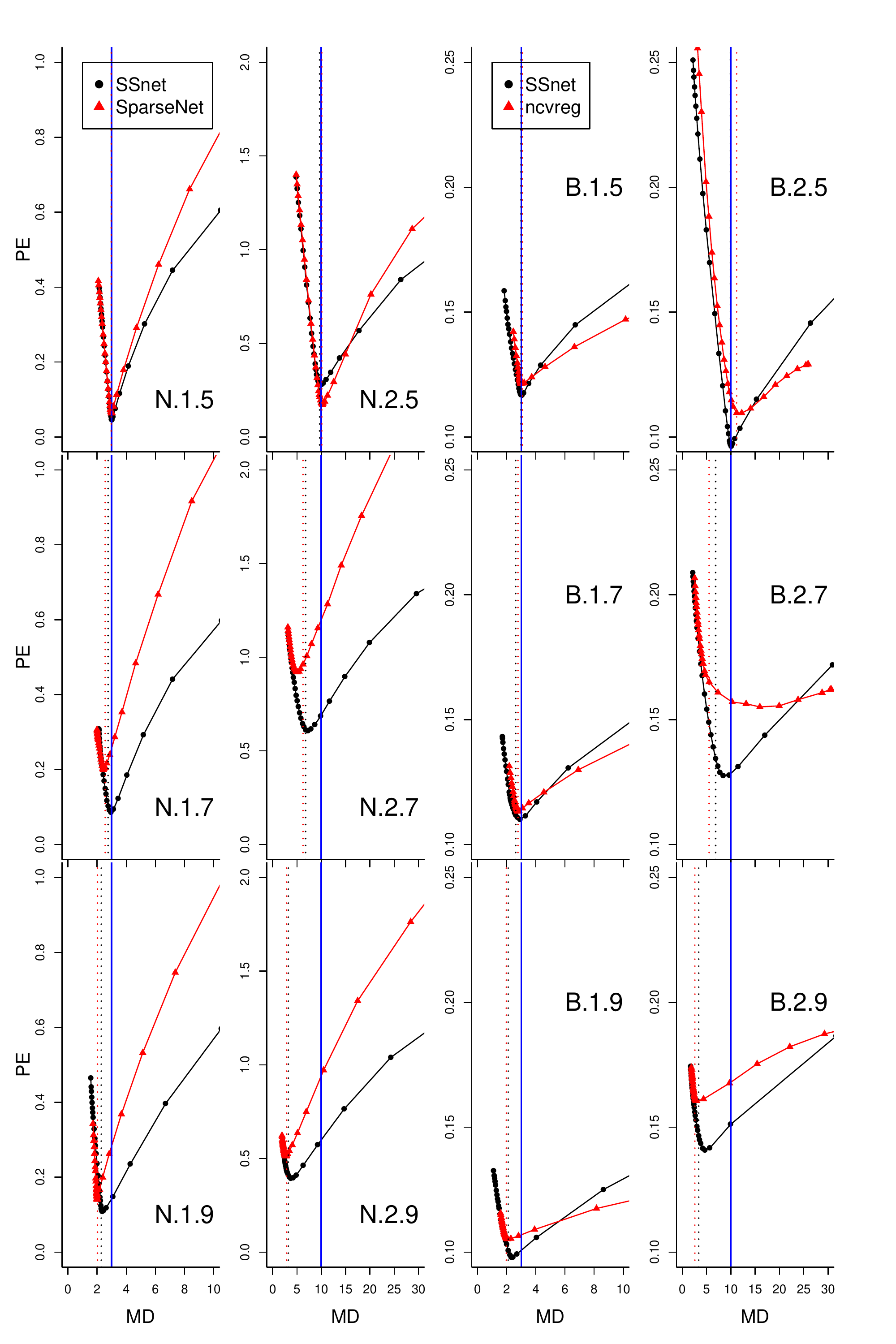}}
\end{figure}

For logistic models we compared the performance of two algorithms: SSnet and MCP implemented in the R package {\tt ncvreg}  for the default  value 
of  $\gamma = 3$
and 100 values of $\lambda$. As for linear models, SSnet called the R package {\tt glmnet} \cite{FriedmanEtAl10} 
to compute the Lasso estimators for 20 lambdas on a default log scale.
We performed experiments very similar to those for linear models, changing only $n$ and the number of simulation 
runs to $N = 500$. The plan of experiments is shown in the Table \ref{T2}. Random samples were generated according to the binomial distribution. 
We reported 
prediction error defined as misclassification frequency  
on new data set with 1000 observations. The results organized in a similar way 
as for the linear models are shown 
in columns 3--4 of Figure \ref{wyn_1}. The two vertical lines indicate models chosen using GIC with $c_k = 2$, the black one for SSnet 
and the red one for {\tt ncvreg}.

Summarizing the results of the simulation study, one can observe that SSnet for linear models turned out to have equal or lower PE in almost 
all of the experimental setups.  The differences are  most visible in setups with autocorrelation structure with $\rho=0.7$. 
The value $c_k = 2.5$ in GIC usually gave  satisfactory results.
The mean execution time of SSnet was approximately 3 times faster than for {\tt SparseNet}.
SSnet for logistic regression gave higher accuracy as {\tt ncvreg}, but 
execution time of SSnet was approximately 10 times longer than {\tt ncvreg}.
The value $c_k = 2$ in GIC usually gave  satisfactory results.
\begin{table}[tb]
\centering
\caption{Plan of experiments for logistic models.}
\label{T2}
\begin{small}
\begin{tabular}{lcccccccccc}
  \hline
& n & p & $\mathring{\beta}$ & $\rho$  \\
\hline
B.1.5 & 300 & 3000 & $\mathring{\beta}^{(1)}$  & .5 \\
B.1.7 & 300 & 3000 & $\mathring{\beta}^{(1)}$  & .7  \\
B.1.9 & 300 & 3000 & $\mathring{\beta}^{(1)}$  & .9\\
\hline
B.2.5 & 500 & 2000 & $\mathring{\beta}^{(2)}$  & .5  \\
B.2.7 & 500 & 2000 & $\mathring{\beta}^{(2)}$  & .7 \\
B.2.9 & 500 & 2000 & $\mathring{\beta}^{(2)}$  & .9 \\
 \hline
\end{tabular}
\end{small}
\end{table}

\subsection{Real Data Sets}

The methylation data set was described in \cite{HannumEtAl13}. It consist of the age of  656 human individuals  together with values of 
phenotypic features such as gender and body mass index and of genetic features, which are methylation states of 485~577 CpG markers. 
Methylation was recorded as a fraction representing the frequency of methylation of a given CpG marker 
across the population of blood cells taken from a single individual. In our comparison we used only genetic features from which
we extracted 193~870 most relevant CpGs according to onefold t-tests with Benjamini-Hochberg adjustment, FDR=.05. 
We compared the root mean squared errors (PE) and model dimensions (MD) for SSnet and {\tt SparseNet}. To calculate PE we used  10-fold cross-validation.
For each value of hyperparameter  $c_k = .25,.5, \ldots, 7.5$  values of $(MD(k),PE(k))$ for the models chosen by GIC=GIC($\lambda_k$) were calculated and averaged over 10 folds.
The results are presented in Figure \ref{wyn_meth1}. 
{\tt SparseNet} yields a path of models for each value of parameter $\gamma=g1,\ldots,g9$. 
We present
results for $g1$, corresponding to the Lasso,  for $g9$, close to the best subset, and   for  an intermediate value $g8$ in Figure \ref{wyn_meth1}.
Remarkably, SSnet gives  uniformly  smaller PE than  
{\tt SparseNet} for all $MD\geq 3$.  The two vertical lines indicate models chosen using GIC with $c_k = 2.5$: 
the black one for SSnet and the red one for {\tt SparseNet}. 
\begin{figure}[hbt]
\label{FigureSmile}
\caption{Results for the methylation data set}\label{wyn_meth1}
\centerline{%
\includegraphics[width=70mm]{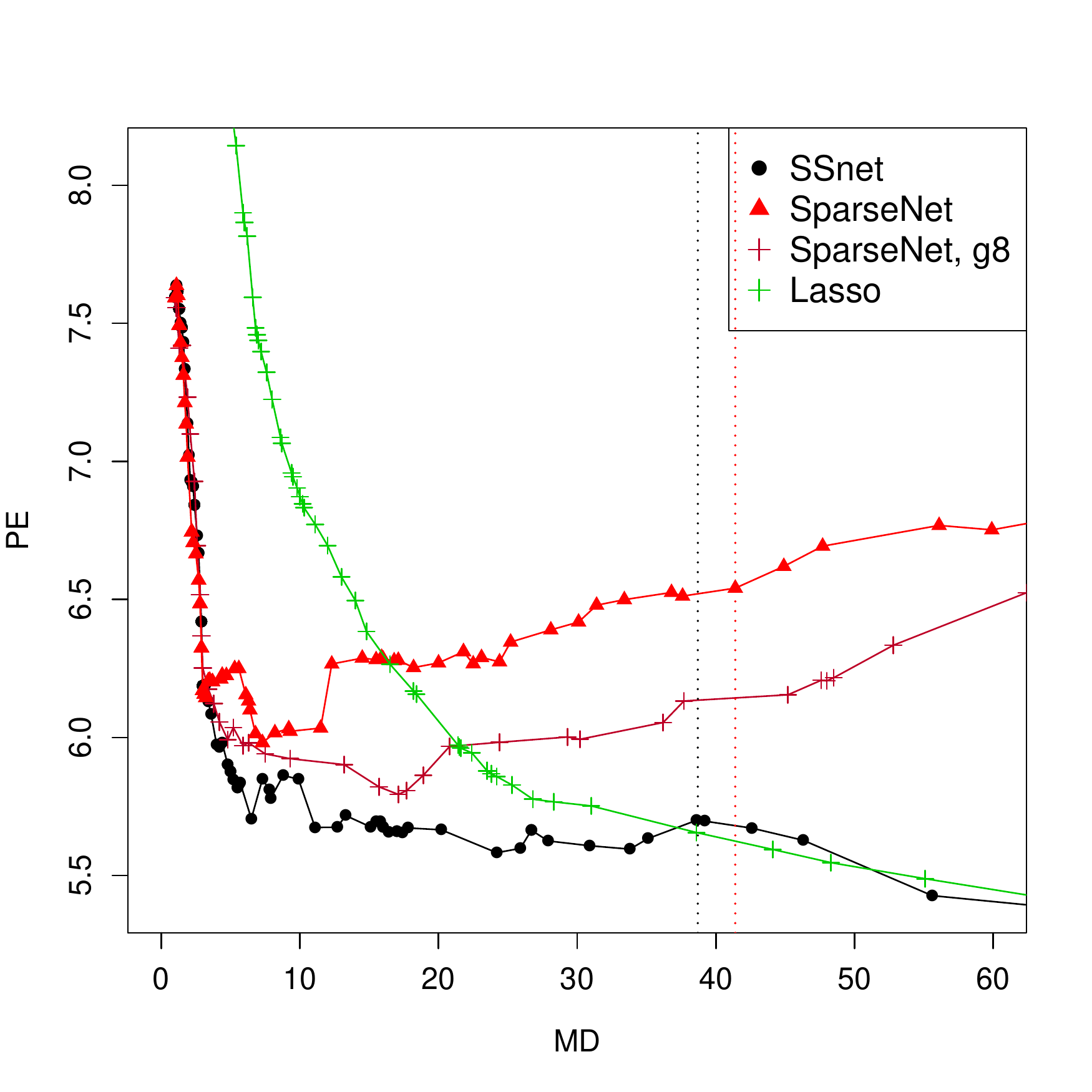}}
\end{figure}

A logistic model was fitted to the breast cancer data  described in \cite{GravierEtAl10} which concerns small, invasive  carcinomas without 
axillary lymph node involvement to predict metastasis of small node-negative breast carcinoma. There were 168 patients: 111 with no event 
after diagnosis labeled as good, and 57  with early metastasis  labeled as poor. The number of predictors in this data was 2905. 
We compared the mean errors of binary prediction (PE) and model dimensions (MD) for SSnet and {\tt ncvreg}. 
For each  of 80 hyperparameters
$c_k = 0.001; 0.005; 0.01; 0.02, \ldots,$ $0.1, 0.15,\ldots,1, \ldots 50$ values of $(MD(k); PE(k))$ for the models chosen by
$GIC=GIC(\lambda_k)$ were calculated and averaged over 10 folds. Again to calculate PE we used 10-fold cross-validation.
 The results are presented in Figure \ref{wyn_breast}. 
It is hard to find the minimal of PE for  {\tt ncvreg}. 
If we increased the net of $c_k$, maybe we would obtain a smaller PE for ncvreg, but the model would be significantly larger than for  SSnet. The algorithms work comparably, but again SSnet was 3 times longer. The two vertical lines indicate models chosen using GIC 
with $c_k = 2$: the black one for SSnet and the red one for {\tt ncvreg}.
\begin{figure}[h]
\caption{Results for the breast cancer data set}\label{wyn_breast}
\centerline{%
\includegraphics[width=70mm]{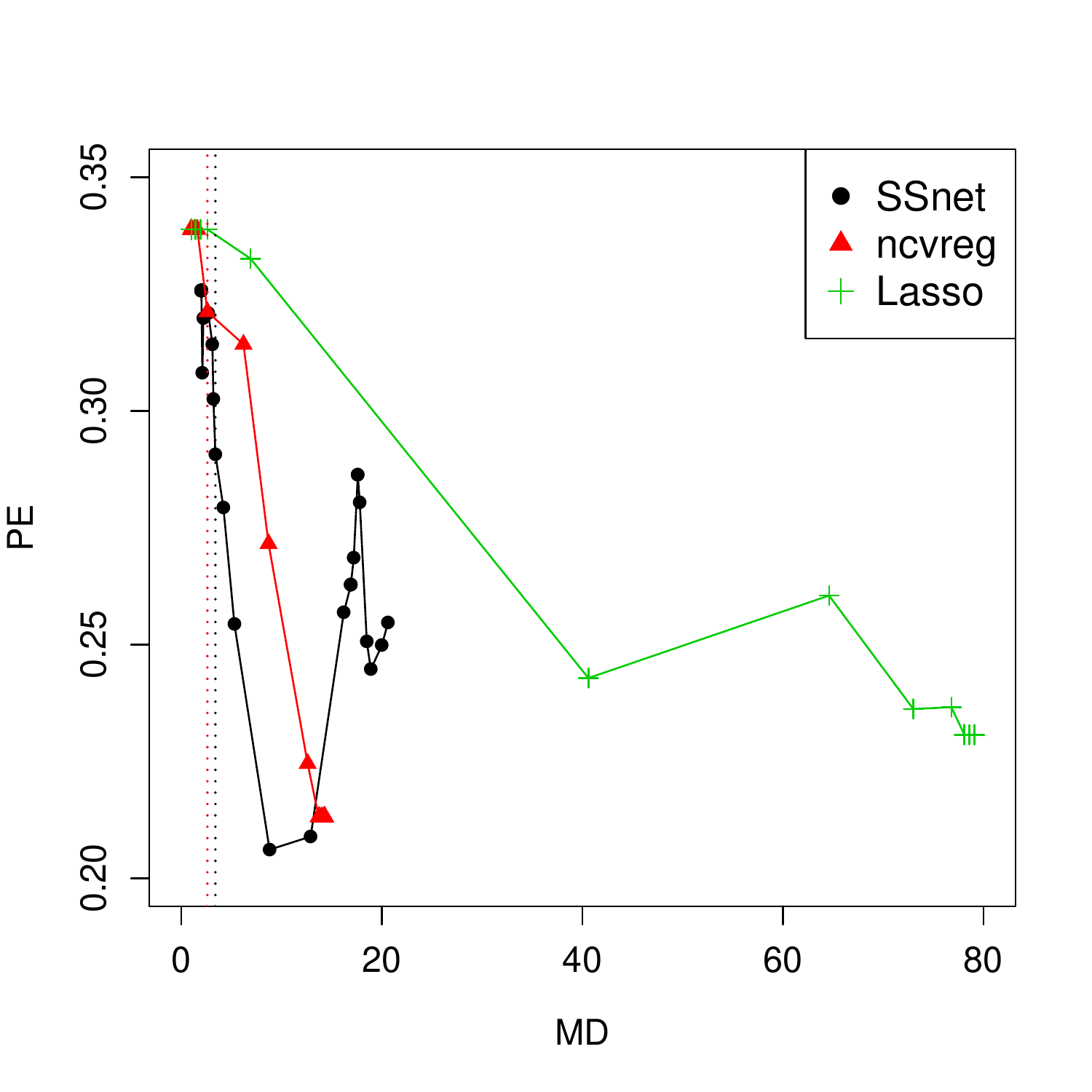}}
\end{figure}

\section{Conclusions}
\label{conclusions}

In the paper we propose the SS algorithm  which is an  alternative method to TL and FCP of improving the Lasso. 
Our approach encompasses fundamental models for prediction of continuous as well as of binary responses 
and the main results are stated jointly for both of them. Its assumptions are stated in the most general form which 
allows proving exponential bound without obscuring the essence of the results and
comparing the bounds for both models. By simplifying SOS to SS we were able to simplify
reasoning used for SOS and then extend them from normal linear models to general predictive models.

We propose an algorithm SSnet, which is a generalization of the SS algorithm for general predictive models.
Using a net of parameters, SSnet avoids the problem of choosing one specific $\lambda$.
The gap between theoretical results for SS and the SSnet algorithm is similar to the difference between theory for FCP and it implementations {\tt SparseNet} or {\tt ncvreg}.
Numerical experiments reveal that  for normal linear models  SSnet is more accurate than {\tt SparseNet} and three times faster, 
whereas for logistic models performance of SSnet is also better than
the performance of {\tt ncvreg} with computing times 3-10 times longer.

We have shown in simulations (dotted vertical lines in Figure~1) that predictively optimal $\lambda$ for normal 
linear models equals approximately $\sqrt{2.5\sigma^2\log p}$, which is close to (\ref{star1}) and for 
logistic models is $\sqrt{2\log p}$, which together with (\ref{star2})  suggests that $c \approx 1/4$. 
The relations between the safest choice $\lambda$ discussed in Remark \ref{safest_choice} and predictively optimal $\lambda$ are important applications of our theory.

In the package ,,DMRnet'' one can find the SOSnet algorithm, which is a net version of SOS from \cite{PokarowskiMielniczuk15}. SOSnet can be viewed as SSnet with additional step (,,Ordering'') based on refitting estimators and calculating Wald statistics. Results of our numerical experiments, which are not presented here, suggest that SOSnet can improve the quality of SSnet, especially in the linear model.

\appendix

\section{Proofs and auxiliary results
}
In the following subsections  we present proofs of  the results stated in the paper.

\subsection{Proof of Lemma~\ref{lemSubG}}
Let $Z=\varepsilon^T\nu/||\nu||$ and $a>0$. From Markov's inequality we obtain 
\[ \mathbb{P}(Z \geq \tau) \leq e^{-a}\mathbb{E}e^{aZ} \leq e^{-a\tau + a^2\sigma^2/2}.\]
Minimizing the last expression w.r.t. $a$ gives part (i).

Let $\xi \sim N(0,I_n)$ be a vector of iid standard normal errors independent of $\varepsilon$. We have
\begin{eqnarray*}
& & \mathbb{E}\exp\left(\frac{a}{2\sigma^2}\varepsilon^T H\varepsilon\right) = \mathbb{E}~\mathbb{E} \bigg[ \exp\left(\frac{\sqrt{a}}{\sigma}\xi^T H\varepsilon \right) \bigg| H\varepsilon \bigg]
= \mathbb{E} \exp\bigg(\frac{\sqrt{a}}{\sigma}\xi^T H\varepsilon\bigg) \cr & &= \mathbb{E}~\mathbb{E} \bigg[ \exp\left(\frac{\sqrt{a}}{\sigma}\xi^T H\varepsilon \right) \bigg| \xi^TH \bigg] 
\leq \mathbb{E}\exp\left(\frac{a}{2}\xi^T H\xi\right).
\end{eqnarray*}
Thus part (ii) follows  from a known formula for the moment generating function of the $\chi^2$ distribution.

From Markov's inequality and the part (ii) of this lemma we have
\begin{eqnarray*} 
\mathbb{P}(\varepsilon^T H\varepsilon \geq m\sigma^2\tau) 
&\leq& \exp \bigg( -\frac{am\tau}{2} \bigg) \mathbb{E}\exp \bigg( \frac{a}{2\sigma^2}\varepsilon^T H\varepsilon\bigg) \\
 &\leq& \exp \bigg( -\frac{m}{2}\bigg(a\tau+\log(1-a)\bigg)\bigg). 
\end{eqnarray*}
Thus, minimizing the last expression  w.r.t $a$ we obtain part (iii).
\qed

\subsection{Proof of Lemma~\ref{LassoBound}}
Let  
\begin{equation}
\label{Aevent}
{\cal A}_{a}=\{|X^T\varepsilon|_\infty\leq a\lambda\}
\end{equation}
 and $\hat\varepsilon=y-\vec{\dot\gamma}(X\hat{\beta})$. We have 
$\dot \ell(\hat{\beta}) = -X^T\hat{\varepsilon}$ and from the Karush-Kuhn-Tucker (KKT) theorem we obtain equations 
 \begin{equation*}
X^T\hat{\varepsilon} = \lambda \big[ \mathbb{I}(\hat{\beta}>0)-\mathbb{I}(\hat{\beta}<0)+u\mathbb{I}(\hat{\beta}=0) \big]~~\text{for}~~ u \in [-1,1]. 
 \end{equation*}
 
Let $\Delta=\hat\beta-\mathring{\beta}$ and $\nu\in \mathbb{R}^p$ be  such that ${\rm sgn}(\nu_{\bar{T}})={\rm sgn}(\Delta_{\bar{T}})$. 
We have $\nu_J^TX_J^T\hat\varepsilon = \lambda|\nu_J|_1$ for $J \subseteq \bar{T}$ and consequently
 \begin{align}
  &D(\nu)= \nu^TX^T\big[\vec{\dot\gamma}(X\hat{\beta}) -\vec{\dot\gamma}(X\mathring{\beta})\big] 
  = \nu_T^TX_T^T(\varepsilon-\hat{\varepsilon}) +\nu_{\bar T}^TX_{\bar T}^T(\varepsilon-\hat{\varepsilon)} \cr
  &\leq |\nu_T|_1(|X_T^T\varepsilon|_\infty + |X_T^T\hat{\varepsilon}|_\infty)+  
    |\nu_{\bar T}|_1(|X_{\bar T}^T\varepsilon|_\infty-\lambda) \cr 
 & \leq  |\nu_T|_1 (1+ a)\lambda+ |\nu_{\bar T}|_1(a-1)\lambda.
 \end{align}
 Then letting $\nu=\Delta_J$  for $J\subseteq \bar T$ we have $D(\nu) \leq 0$. Moreover, for $\nu=\Delta$ we have 
from convexity of $g$ that
\[ D(\nu) = (\hat{\beta}-\mathring{\beta})^T[\dot g(\hat{\beta})-\dot g(\mathring{\beta})]\geq 0.\]
Indeed, $D_0(\beta_1,\beta_2)=(\beta_1-\beta_2)^T[\dot g(\beta_1)-\dot g(\beta_2)]$ is the symmetrized Bregman divergence
\citep[][]{HuangZhang12}. Hence $(1-a)|\nu_{\bar T}|_1 \leq (1+ a)|\nu_T|_1$. 
Thus, on ${\cal A}_a$, $\Delta \in {\cal C}_a$  and from the definition of $\zeta_a$  we obtain using KKT again
\begin{equation*} 
\zeta_{a,q}|\Delta|_q 
    \leq \big|X^T\big[\vec{\dot\gamma}(X\hat{\beta})) -\vec{\dot\gamma}(X\mathring{\beta})\big]\big|_\infty
   \leq |X^T\hat\varepsilon|_\infty  + |X^T\varepsilon|_\infty \leq (1+a)\lambda.
\end{equation*}
\qed
\subsection{Proof of Theorem~\ref{th1}}
First, we will prove that ${\cal A}_a \subseteq \{\hat{T}_{TL} = T \}$ for ${\cal A} _a$ defined in \eqref{Aevent}.
From Lemma~\ref{LassoBound} and assumptions  we have on ${\cal A}_{a}$ 
 \begin{equation}
 \label{separ}
 |\Delta|_\infty\leq (1+a)\lambda \zeta_a^{-1} \leq \tau < \mathring{\beta}_{min}/2,
 \end{equation}
where we recall that $\Delta=\hat\beta-\mathring{\beta}.$
 Thus using (\ref{separ}) twice we have for $j\in T$ and $k\not\in T$
\begin{equation}
\label{rozdziel}
 |\hat\beta_j| \geq |\mathring{\beta}_j|-|\hat\beta_j-\mathring{\beta}_j| > \mathring{\beta}_{min}-\mathring{\beta}_{min}/2 
 > \tau \geq |\hat{\beta}_k -\mathring{\beta}_k|=|\hat \beta_k|
\end{equation}
and it follows that ${\cal A}_a \subseteq \{\hat{T}_{TL} = T \}$.
Morover, the assumptions of  this theorem imply
 \[
-a_1^2\lambda^2+2\sigma^2\log p \leq -(1-a_2)a_1^2\lambda^2.
\]  
Hence, using Lemma~\ref{lemSubG}~(i) we easily obtain
 \begin{equation*}
  \mathbb{P}(\hat{T}_{TL} \neq T) \leq \mathbb{P}({\cal A}_{a_1}^c)\leq 2p\exp\big(-\frac{a_1^2\lambda^2}{2\sigma^2}\big) 
   \leq 2\exp\big(-\frac{(1-a_2)a_1^2\lambda^2}{2\sigma^2}\big).
  \end{equation*}
\qed

\subsection{Proof of Theorem~\ref{th2}}

Let us observe that the consecutive steps of the SS algorithm constitute a decomposition of the selection error into two parts:
\begin{equation}
\label{error_decom}
\{ \hat{T} \neq T\} = \{T\not\in {\cal J}\} \cup \{T\in {\cal J},\hat{T}\neq T \}.
\end{equation}
Therefore, Theorem~\ref{th2} follows easily from (\ref{errLasso2}) and (\ref{errGIC}) below.

Having in mind that for given $a_1 \in (1/2,1)$ we let $a_2=1-(1-\log (1-a_1))(1-a_1)$, $a_3=2-1/a_1$ and $a_4=\sqrt{a_1a_2}$,
by arguments similar to those in the proof of Theorem~\ref{th1} we obtain ${\cal A}_{a_4} \subseteq \{ T \in {\cal J} \}$.
Moreover, assumptions $0<c\leq 1$ and $\frac{2\sigma^2\log p}{a_3a_2a_1c} \leq \lambda^2 $ imply
\begin{equation*}
-a_4^2\lambda^2+2\sigma^2\log p \leq -a_2a_1c\lambda^2 +2\sigma^2\log p 
 \leq -(1-a_3)a_2a_1c\lambda^2 =  -a_2(1-a_1)c\lambda^2.
\end{equation*}
As a result
\begin{equation}
\label{errLasso2}
 \mathbb{P}\big( T\not\in {\cal J}  \big) \leq \mathbb{P}({\cal A}_{a_4}^c) \leq 2p\exp\big(-\frac{a_4^2\lambda^2}{2\sigma^2}\big) 
  \leq 2\exp\big(-\frac{a_2(1-a_1)c\lambda^2}{2\sigma^2}\big).
\end{equation}


Now we bound  $\mathbb{P}\big( T\in {\cal J},\hat{T}\neq T \big)$.
In Lemma \ref{lemSub2} and \ref{lemSup}, given below, we bound probability that in the second step of the SS algorithm GIC chooses a subset of the true set, i.e.
$$
\{ T\in {\cal J},\hat{T}\subset T\}
$$
or a superset of $T,$ i.e. 
$$
\{ T\in {\cal J},\hat{T}\supset T\}.
$$
These lemmas state that both components of the selection error set are included in the critical sets of the following form
\begin{equation}
\label{quadr}
{\cal C}_J(\tau) = \{ \varepsilon^T H_J \varepsilon \geq \tau \}
\end{equation}
or 
\begin{equation}
\label{quadr2}
{\cal C}_{J\ominus T}(\tau) = \{ \varepsilon^T (H_J-H_T) \varepsilon \geq \tau \},
\end{equation}
where $J$ is such that $T \subset J,r(X_J)=|J|\leq \bar{t}.$
We consider only supersets $J$ that $r(X_J)=|J|,$ because GIC corresponding to the superset $J$ such that $r(X_J)<|J|$ is larger than GIC 
corresponding to a superset $J_1$ such that $J_1 
\subsetneq J$ and $r(X_{J_1})=|J_1|.$

Let us define $\tau_0=\frac{1}{1-a_1},~\tau_1=\frac{(1-a_1)c\lambda^2}{t\sigma^2}$ and $\tau_2=\frac{a_1c\lambda^2}{\sigma^2}$. 
Under our assumptions we have $2<\tau_0<\tau_1<\tau_2$. Let $f_2(\tau)=1-(1+\log \tau)/\tau$ for $\tau>1$.
Of course $f_2$ is increasing, $f_2(1)=0$ and $f_2(\tau)\to 1$ for $\tau\to\infty$. Consequently $a_2=f_2(\tau_0)<f_2(\tau_1)<f_2(\tau_2)$, which means that
\begin{equation}
\label{tau}
a_2\tau_r<\tau_r-1-\log \tau_r~\text{for}~r=1,2.
\end{equation}
From Lemma~\ref{lemSub2}, Lemma~\ref{lemSup}, Lemma~\ref{lemSubG}~(iii) and (\ref{tau}) we get 
\begin{align*}
 & \mathbb{P}\big( T\in {\cal J},\hat{T} \neq T  \big)  
 \leq \mathbb{P}\big( {\cal C}_T(t\sigma^2\tau_1) \big)
    + \sum_{J\supset T:r(X_J)=|J|\leq \bar{t}}  \mathbb{P}\big(  {\cal C}_{J\ominus T}(|J\setminus T|\sigma^2\tau_2) \big) \cr
& \leq \exp\big(-ta_2\tau_1/2\big) + \sum_{m=1}^{p-t} {p-t\choose m} \exp\big(-ma_2\tau_2/2\big)
 \cr & \leq \exp\big(-ta_2\tau_1/2\big) + \sum_{m=1}^{p-t} \frac{1}{m!} \exp \left(\frac{-m}{2}(a_2\tau_2-2\log p)\right).
\end{align*}
Using $\exp(d)-1 \leq \log(2)^{-1}d$ for $0\leq d \leq \log(2)$ and the fact that
probability is not greater than $1$ we obtain
\begin{equation*}
\mathbb{P}\big( T\in {\cal J},\hat{T} \neq T  \big) 
 \leq \exp\big(-ta_2\tau_1/2\big) + (\log 2)^{-1} \exp\big(-(a_2\tau_2-2\log p)/2\big). 
\end{equation*}
For $a_3 \in (0,1)$, assumption $ \frac{2\sigma^2\log p}{a_3a_2a_1c} \leq \lambda^2$ implies 
$-a_2\tau_2+2\log p \leq -(1-a_3)a_2\tau_2$, therefore
\begin{equation}
\label{errGIC}
 \mathbb{P}\big( T\in {\cal J},\hat{T}\neq T  \big) \leq  \exp\big(-ta_2\tau_1/2\big) 
  +   (\log 2)^{-1}\exp\big(-(1-a_3)a_2\tau_2/2\big),
\end{equation}
that finishes the proof of Theorem \ref{th2}. \qed 

Before we state Lemma \ref{lemSub2} and \ref{lemSup} we introduce few notations.
For $k=1,\ldots,t-1$ and $\delta_k$ in \eqref{delta_k} we define 
\begin{eqnarray}
\label{calE}
{\cal E}_k(\tau) &=& \{\exists J\subset T, \, |T\setminus J|=k:\, \ell_J -\ell_T \leq \tau \}, \\
 B_k&=&\{\beta_T: ||X_T\beta_T-X_T\mathring{\beta}_T||^2 \leq \delta_k \}. \nonumber
\end{eqnarray}
First, we prove the following lemma which will be used in the proof of Lemma \ref{lemSub2}.

\begin{lemma}
\label{aux_lemma}
 For $b \in (0,1)$  we have 
\[ {\cal E}_k(bc\delta_k/2) \subseteq  {\cal C}_T((1-b)^2c^2\delta_k/4). \]
\end{lemma}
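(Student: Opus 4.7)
The plan is to convert the event ${\cal E}_k(bc\delta_k/2)$ into the quadratic event ${\cal C}_T$ through the strong convexity condition (\ref{convexLoss}) applied at $\mathring\beta$. On ${\cal E}_k(bc\delta_k/2)$ there exists $J \subset T$ with $|T \setminus J| = k$ such that $\ell(\hat{\beta}^{ML}_J) - \ell(\hat{\beta}^{ML}_T) \leq bc\delta_k/2$. Since $\mathring\beta$ is supported in $T$ and $\hat{\beta}^{ML}_T$ minimises $\ell$ over all vectors with support in $T$, $\ell(\hat{\beta}^{ML}_T) \leq \ell(\mathring\beta)$, and hence $\ell(\hat{\beta}^{ML}_J) - \ell(\mathring\beta) \leq bc\delta_k/2$.

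Write $a = \|X_J \hat{\beta}^{ML}_J - X_T \mathring\beta_T\|$ and $\eta = \sqrt{\varepsilon^T H_T \varepsilon}$. Because $J \subset T$, the vector $X\hat{\beta}^{ML}_J - X\mathring\beta = X_J\hat{\beta}^{ML}_J - X_T\mathring\beta_T$ lies in the column space of $X_T$, so Cauchy--Schwarz gives
\[
(\hat{\beta}^{ML}_J - \mathring\beta)^T X^T \varepsilon \;=\; (X\hat{\beta}^{ML}_J - X\mathring\beta)^T H_T \varepsilon \;\leq\; a\eta.
\]
Substituting this into (\ref{convexLoss}) at $\beta = \hat{\beta}^{ML}_J$ produces $\frac{c}{2}a^2 - a\eta \leq \ell(\hat{\beta}^{ML}_J) - \ell(\mathring\beta) \leq bc\delta_k/2$. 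An orthogonal decomposition $X_J\hat{\beta}^{ML}_J - X_T\mathring\beta_T = X_J\bigl(\hat{\beta}^{ML}_J - (X_J^T X_J)^{-1}X_J^T X_T\mathring\beta_T\bigr) - (I - H_J)X_T\mathring\beta_T$ combined with Pythagoras yields $a^2 \geq \|(I-H_J)X_T\mathring\beta_T\|^2 \geq \delta_k$ by the definition (\ref{delta_k}) of $\delta_k$.

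Rearranging the quadratic inequality gives $\eta \geq \frac{c}{2}a - \frac{bc\delta_k}{2a}$; since the right-hand side is strictly increasing in $a>0$, its minimum over the feasible range $a \geq \sqrt{\delta_k}$ equals $(1-b)c\sqrt{\delta_k}/2$. Squaring produces $\varepsilon^T H_T \varepsilon \geq (1-b)^2 c^2 \delta_k/4$, which is precisely the event ${\cal C}_T((1-b)^2 c^2\delta_k/4)$.

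The one technical obstacle is that (\ref{convexLoss}) is only available at points of $\mathbb{B}$, which translates to the constraint $a \leq \sqrt{\delta_{t-1}}$, and need not hold a priori. The remedy is to replace $\hat{\beta}^{ML}_J$ by the convex combination $\beta^\star = \mathring\beta + t^\star(\hat{\beta}^{ML}_J - \mathring\beta)$ with $t^\star = \min(1, \sqrt{\delta_{t-1}}/a) \leq 1$, so that $\beta^\star \in \mathbb{B}$ by construction and $t^\star a \in [\sqrt{\delta_k}, \sqrt{\delta_{t-1}}]$ (using $\delta_{t-1} \geq \delta_k$). Convexity of $\ell$ yields $\ell(\beta^\star) - \ell(\mathring\beta) \leq t^\star[\ell(\hat{\beta}^{ML}_J) - \ell(\mathring\beta)] \leq bc\delta_k/2$, and repeating the above argument verbatim with $t^\star a$ in place of $a$ delivers the same lower bound on $\eta$, completing the inclusion.
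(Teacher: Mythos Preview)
Your proof is correct and follows essentially the same route as the paper: bound $\ell(\hat\beta_J^{ML})-\ell(\mathring\beta)$ from below via strong convexity \eqref{convexLoss} plus Cauchy--Schwarz against $H_T\varepsilon$, use $\|X_J\hat\beta_J^{ML}-X_T\mathring\beta_T\|^2\ge\delta_k$, and handle the domain restriction of \eqref{convexLoss} by a convex pull-back toward $\mathring\beta$. The only presentational difference is that the paper pulls $\hat\beta_J^{ML}$ back all the way to $\partial B_k$ (radius exactly $\sqrt{\delta_k}$), so the boundary inequality $\min_{\partial B_k}\mathring\ell\le bc\delta_k/2$ is obtained directly and no monotonicity-in-$a$ step is needed; you instead pull back only to radius $\sqrt{\delta_{t-1}}$ and then minimise $\tfrac{c}{2}a-\tfrac{bc\delta_k}{2a}$ over $a\ge\sqrt{\delta_k}$, which yields the same bound.
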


\begin{proof}
For $\beta_T \in \partial B_k$  from assumption \eqref{convexLoss}, Schwartz inequality and properties of the 
orthogonal projection $H_T$  we get
\begin{eqnarray*}
\mathring{\ell}(\beta_T) := \ell(\beta_T) - \ell(\mathring{\beta}_T) &\geq&  (\mathring{\beta}_T - \beta_T)^T X_T^TH_T\varepsilon 
 + \frac{c}{2} (\mathring{\beta}_T - \beta_T)^T X_T^TX_T(\mathring{\beta}_T - \beta_T)
\\ & \geq& - \sqrt{\delta_k\varepsilon^T H_T\varepsilon} + \frac{c}{2}\delta_k.
\end{eqnarray*}
Since the last expression does not depend on $\beta_T$, we have for $b \in (0,1)$
\begin{eqnarray*}
{\cal L}_k(b) &:=& \left\{ \min_{\beta_T \in \partial B_k} \mathring{\ell}(\beta_T) \leq \frac{bc\delta_k}{2} \right\} 
 \subseteq \left\{ -\sqrt{\delta_k\varepsilon^T H_T \varepsilon} + \frac{c}{2}\delta_k \leq \frac{bc\delta_k}{2} \right\} \\
&=& {\cal C}_T \left( \frac{(1-b)^2c^2 \delta_k}{4} \right).
\end{eqnarray*}
Let us notice that for $ J\subset T$ such that $|T\setminus J|=k$ we have 
$||X_T(\widehat{\beta}_J^{ML} - \mathring{\beta}_T) ||^2 \geq \delta_k$, 
so $\widehat{\beta}_J^{ML} \notin \text{int}(B_k)$. 
Since $\mathring{\ell}$ is convex and $\mathring{\ell}(\mathring{\beta}_T) = 0$ we obtain
$ {\cal L}_k(b) \supseteq {\cal E}_k(bc\delta_k/2)$.
\end{proof}

\begin{lemma}
\label{lemSub2}
If for $a \in (0,1)$ we have $\lambda^2 < c\delta\big/\big(1+\sqrt{2a}\big)^2 $, then
$\{T\in {\cal J},\hat{T}\subset T \} \subseteq {\cal C}_T(ac\lambda^2).$
\end{lemma}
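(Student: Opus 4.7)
The plan is to reduce the event $\{T\in{\cal J},\hat T\subset T\}$ to a union of the ${\cal E}_k$ events defined in \eqref{calE}, apply the preceding Lemma~\ref{aux_lemma} to each of them, and then check that the assumed upper bound on $\lambda^2$ gives a uniform comparison with the threshold $ac\lambda^2$ in ${\cal C}_T(ac\lambda^2)$.

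First I would unpack the GIC comparison. If $T\in{\cal J}$ but the SS minimiser $\hat T$ is a strict subset of $T$, then the nested family ${\cal J}$ contains some $J\subsetneq T$ with $\ell_J+\lambda^2|J|/2\leq \ell_T+\lambda^2 t/2$. Writing $k=|T\setminus J|\in\{1,\ldots,t-1\}$, this is precisely $\ell_J-\ell_T\leq \lambda^2 k/2$, so
\begin{equation*}
\{T\in{\cal J},\hat T\subset T\}\ \subseteq\ \bigcup_{k=1}^{t-1}{\cal E}_k\!\left(\lambda^2 k/2\right).
\end{equation*}
For each $k$ I want to apply Lemma~\ref{aux_lemma} with $b=b_k:=\lambda^2 k/(c\delta_k)$, chosen so that $b_kc\delta_k/2=\lambda^2 k/2$. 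The hypothesis $\lambda^2<c\delta/(1+\sqrt{2a})^2$ together with the bound $\delta_k\geq k\delta$ (which is immediate from the definitions of $\delta_k$ and $\delta$) yields $b_k\leq b_0:=\lambda^2/(c\delta)<1/(1+\sqrt{2a})^2<1$, so Lemma~\ref{aux_lemma} applies and gives
\begin{equation*}
{\cal E}_k(\lambda^2 k/2)\ \subseteq\ {\cal C}_T\!\left((1-b_k)^2c^2\delta_k/4\right).
\end{equation*}

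The only real work is to show that $(1-b_k)^2c^2\delta_k/4\geq ac\lambda^2$ for every $k\in\{1,\ldots,t-1\}$, because then ${\cal C}_T((1-b_k)^2c^2\delta_k/4)\subseteq{\cal C}_T(ac\lambda^2)$ and taking the union over $k$ finishes the proof. Substituting $c\delta_k=\lambda^2 k/b_k$, this inequality is equivalent to $(1-b_k)^2/b_k\geq 4a/k$, so using $k\geq 1$ it is enough to verify the $k$-free inequality $(1-b_k)^2\geq 4ab_k$. Since the function $b\mapsto(1-b)^2/b$ is decreasing on $(0,1)$ and $b_k\leq b_0$, it suffices to check this at $b_0$, i.e.\ $1-b_0\geq 2\sqrt{ab_0}$, equivalently $\sqrt{b_0}\leq \sqrt{1+a}-\sqrt a$. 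Our hypothesis delivers $\sqrt{b_0}<1/(1+\sqrt{2a})$, so what remains is the purely numerical fact $1/(1+\sqrt{2a})\leq\sqrt{1+a}-\sqrt a=1/(\sqrt{1+a}+\sqrt a)$, i.e.\ $\sqrt{1+a}+\sqrt a\leq 1+\sqrt{2a}$.

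I expect this final algebraic inequality to be the main obstacle, since naively it looks delicate, but squaring reduces it to $\sqrt{a(1+a)}\leq\sqrt{2a}$, which is just $a\leq 1$; with equality only at $a\in\{0,1\}$. Once this elementary step is in place, all the pieces chain together and the containment $\{T\in{\cal J},\hat T\subset T\}\subseteq{\cal C}_T(ac\lambda^2)$ follows uniformly in $k$, completing the proof.
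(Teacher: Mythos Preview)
Your proof is correct and follows essentially the same route as the paper: reduce to $\bigcup_k{\cal E}_k(k\lambda^2/2)$, apply Lemma~\ref{aux_lemma} with $b_k=\lambda^2 k/(c\delta_k)$, and then use $b_k\le b_0=\lambda^2/(c\delta)$ together with the monotonicity of $b\mapsto(1-b)^2/b$ to reduce the comparison $(1-b_k)^2c^2\delta_k/4\ge ac\lambda^2$ to the single inequality $4ab_0\le(1-b_0)^2$. The only cosmetic difference is that the paper handles this last inequality by solving the quadratic in $b_0$ (obtaining the root $f_1(a)=1/(1+2a+\sqrt{(1+2a)^2-1})$ and checking $f_1(a)\ge(1+\sqrt{2a})^{-2}$), whereas you solve it in $\sqrt{b_0}$; both reduce to $\sqrt{a(1+a)}\le\sqrt{2a}$, i.e.\ $a\le 1$.
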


\begin{proof}
From assumption of this lemma $\lambda^2<c\delta$, so $b_k: =\lambda^2k/(c\delta_k)<1$. Hence
for $k=1,\ldots,t-1$
\begin{equation}
\label{sub1}
k\lambda^2 \leq b_kc\delta_k.
\end{equation}
Moreover, if $a\in (0,1)$ then
\begin{equation}
\label{sub2}
akc\lambda^2 \leq (1-b_k)^2c^2\delta_k/4,
\end{equation}
because $ab \leq (1-b)^2/4$ for $b=\lambda^2/(c\delta)$.
The last inequality is true, if
\begin{equation}
\label{quadratic}
b \leq 1 +2a-\sqrt{(1+2a)^2-1}=f_1(a).
\end{equation}
Indeed, it is easy to check that (\ref{quadratic}) follows from the assumption as
\[f_1(a)=\frac{1}{1+2a +\sqrt{(1+2a)^2-1}}\geq \frac{1}{(1+\sqrt{2a})^2}.\]
Finally from (\ref{sub1}), Lemma \ref{aux_lemma} and (\ref{sub2}) we obtain, respectively
\begin{eqnarray*}
\{T\in {\cal J},\hat{T}\subset T \} &\subseteq& \bigcup_{k=1}^{t-1}{\cal E}_k \left(\frac{k\lambda^2}{2}\right)
\subseteq \bigcup_{k=1}^{t-1}{\cal E}_k \left(\frac{b_kc\delta_k}{2} \right) 
\\ &\subseteq& \bigcup_{k=1}^{t-1}{\cal C}_T \left(\frac{(1-b_k)^2c^2\delta_k}{4} \right) \subseteq {\cal C}_T(ac\lambda^2)
\end{eqnarray*}
\end{proof}

\begin{lemma}
\label{lemSup}
For $a \in (0,1)$ we have  
\begin{equation}
\label{supers}
 \{T\in {\cal J},\hat{T}\supset T \} \subseteq {\cal C}_T((1-a)c\lambda^2)        
 \cup \bigcup_{J \supset T:r(X_J)=|J|\leq \bar{t}} {\cal C}_{J\ominus T}(|J\setminus T|ac\lambda^2). 
\end{equation}
\end{lemma}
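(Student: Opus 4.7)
The plan is to combine the GIC-optimality of $\hat T$ with the strong convexity~\eqref{convexLoss} of $\ell$ at $\mathring{\beta}$. Fix $J\supsetneq T$ with $r(X_J)=|J|\leq \bar t$ and suppose $\hat T = J$. Since both $T,J\in {\cal J}$, comparing the GIC values immediately gives
$$\ell_T - \ell_J \,\geq\, \frac{\lambda^2}{2}\,|J\setminus T|.$$
It remains to upper bound $\ell_T-\ell_J$ by a quadratic form in $\varepsilon$ that splits as $\varepsilon^T H_T\varepsilon + \varepsilon^T(H_J-H_T)\varepsilon$, and then to partition the resulting inequality.

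Set $u=X(\hat\beta_J^{ML}-\mathring{\beta})$. Since $u$ lies in the column span of $X_J$ we have $H_J u=u$, and provided $\hat\beta_J^{ML}\in\mathbb{B}$, the strong convexity \eqref{convexLoss} applied at $\hat\beta_J^{ML}$ yields
$$\ell_J-\ell_T \,\geq\, -u^T H_J \varepsilon + \frac{c}{2}\|u\|^2 \,\geq\, -\frac{1}{2c}\,\varepsilon^T H_J \varepsilon,$$
where the last step comes from minimizing the preceding quadratic in $u$ over $\mathrm{col}(X_J)$. Combining with the GIC bound and using the orthogonal decomposition $H_J = H_T + (H_J-H_T)$, valid because $T\subset J$:
$$c\,\lambda^2\,|J\setminus T| \,\leq\, \varepsilon^T H_T\varepsilon + \varepsilon^T(H_J-H_T)\varepsilon.$$
For $a\in(0,1)$, at least one summand absorbs its share: either $\varepsilon^T H_T\varepsilon \geq (1-a)c\lambda^2|J\setminus T|\geq (1-a)c\lambda^2$ (since $|J\setminus T|\geq 1$), giving ${\cal C}_T((1-a)c\lambda^2)$, or $\varepsilon^T(H_J-H_T)\varepsilon \geq ac\lambda^2|J\setminus T|$, giving ${\cal C}_{J\ominus T}(|J\setminus T|ac\lambda^2)$. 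Taking the union over admissible $J$ delivers \eqref{supers} in this case.

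The main obstacle is legitimizing strong convexity when $\hat\beta_J^{ML}\notin\mathbb{B}$. I handle this via the convexity trick used in the proof of Lemma~\ref{lemSub2}: pick $s\in(0,1)$ with $\|X(\hat\beta_J^{ML}-\mathring{\beta})\|\cdot s = \sqrt{\delta_{t-1}}$ and set $\tilde\beta = \mathring{\beta}+s(\hat\beta_J^{ML}-\mathring{\beta})$, so that $\tilde\beta$ lies on the boundary of the ball in $\mathbb{B}$ indexed by $J$. Convexity of $\ell$ together with $\ell_J\leq \ell_T$ gives $\ell(\tilde\beta)-\ell(\mathring{\beta})\leq s(\ell_J-\ell_T)\leq 0$. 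Applying strong convexity at $\tilde\beta$ and Cauchy--Schwarz to $\tilde u = sX(\hat\beta_J^{ML}-\mathring{\beta})$ (which satisfies $\|\tilde u\|^2=\delta_{t-1}$) then yields $\varepsilon^T H_J\varepsilon \geq c^2\delta_{t-1}/4$. Under the assumption $\lambda^2 < c\delta_{t-1}/(16(\bar t-t))$ and since $|J\setminus T|\leq \bar t-t$, this lower bound strictly exceeds $c\lambda^2|J\setminus T|$, so the same case-split used above again places the event inside the announced union, completing the proof.
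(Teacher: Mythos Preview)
Your proof is correct and follows essentially the same route as the paper: GIC optimality gives $\ell_T-\ell_J\geq |J\setminus T|\lambda^2/2$; strong convexity plus completing the square yields $\ell_J-\ell_T\geq -\tfrac{1}{2c}\varepsilon^T H_J\varepsilon$ when $\hat\beta_J^{ML}\in\mathbb{B}$; a convex interpolation to the boundary of $\mathbb{B}$ handles the other case; and the $(1-a)/a$ split of $\varepsilon^T H_J\varepsilon=\varepsilon^T H_T\varepsilon+\varepsilon^T(H_J-H_T)\varepsilon$ finishes. The only noteworthy difference is that you place $\tilde\beta$ exactly on the boundary (so $\|\tilde u\|^2=\delta_{t-1}$, giving $\varepsilon^T H_J\varepsilon\geq c^2\delta_{t-1}/4$), whereas the paper's interpolation $u=\sqrt{\delta_{t-1}}/(\sqrt{\delta_{t-1}}+d_J)$ lands strictly inside and yields the weaker bound $c^2\delta_{t-1}/16$; your variant is slightly cleaner but not a different idea.
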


\begin{proof}
For $J$ such that $J \supset T,r(X_J)=|J|\leq \bar{t}$ define $W_J=X_J(\mathring{\beta}_J-\hat{\beta}_J^{ML})$ and $m=|J\setminus T|$. 
Notice that the event $\{\ell_J - \ell_T \leq -m\lambda^2/2 \}$
can be decomposed as 
\begin{equation}
\label{decomp}
\{\ell_J - \ell_T \leq -m\lambda^2/2, \hat{\beta}_J^{ML} \in  B_J \} \cup\{\ell_J - \ell_T \leq -m\lambda^2/2 , \hat{\beta}_J^{ML} \notin  B_J\},
\end{equation}
where $
B_J =\{\beta_J: ||X_J (\bo-\beta_J)||^2 \leq \dt\} .
$
For $\hat{\beta}_J^{ML} \in  B_J$ we have
from assumption \eqref{convexLoss}  and properties of orthogonal projection $H_J$
\begin{eqnarray*}
\ell_J - \ell_T &\geq& \ell(\hat{\beta}_J^{ML}) - \ell(\mathring{\beta}_J) \\
 &\geq& (\mathring{\beta}_J-\hat{\beta}_J^{ML})^TX_J^TH_J\varepsilon 
 + \frac{c}{2}(\mathring{\beta}_J-\hat{\beta}_J^{ML})^T X_J^TX_J (\mathring{\beta}_J-\hat{\beta}_J^{ML})
\\ &=& W_J^TH_J\varepsilon + \frac{c}{2}W_J^TW_J = \frac{1}{2c}||cW_J+H_J\varepsilon ||^2 - \frac{1}{2c}\varepsilon^TH_J\varepsilon \\
 &\geq&  - \frac{1}{2c}\varepsilon^TH_J\varepsilon,
\end{eqnarray*}
so $\{\ell_J - \ell_T \leq -m\lambda^2/2, \hat{\beta}_J^{ML} \in  B_J \} \subseteq \{ \varepsilon^TH_J\varepsilon \geq mc\lambda^2\}$. 
The second event in the sum \eqref{decomp} is obviously contained in $\{\hat{\beta}_J^{ML} \notin  B_J\}.$ 
Now, we show that this event is also contained in $\{\varepsilon^TH_J\varepsilon \geq mc\lambda^2\}$.
To do it, we use argumentation similar to \citet{geer02}. Thus, we define
\begin{eqnarray*}
&\,&d_J=||X_J (\hbj ^{ML} -\b*)||, \quad\quad \quad u=\frac{\sqrt{\delta_{t-1}}}{\sqrt{\delta_{t-1}}+d_J} ,\\
&\,& \tbj =u \hbj ^{ML}+(1-u)\b* , \quad\quad \quad \tilde{d}_J=||X_J (\tbj -\b*)||.
\end{eqnarray*}
The random vector $\tbj$ belongs to $B_J,$ because
$$
 ||X_J(\tbj  -\b*) ||=u d_J \leq \sqrt{\delta_{t-1}}.
$$
Using convexity of the loss function $\ell$ we have 
$$
\ell(\tbj) \leq u \ell(\hbj ^{ML}) + (1-u) \ell(\b*)= u[\ell(\hbj ^{ML})- \ell(\b*)] + \ell(\b*)\leq \ell(\b*).
$$
Using this fact as well as assumption \eqref{convexLoss}, Schwartz inequality and properties of the 
orthogonal projection $H_J$  we get
\begin{eqnarray*}
0 &\geq&  \ell (\tbj) - \ell(\b*)
\geq  (\mathring{\beta}_J - \tbj)^T X_J^TH_J\varepsilon 
 + \frac{c}{2} (\mathring{\beta}_J - \tbj)^T X_J^TX_J(\mathring{\beta}_J - \tbj)
\\ & \geq& - \sqrt{\tilde{d}_J^2 \varepsilon^T H_J\varepsilon} + \frac{c}{2} \tilde{d}_J^2.
\end{eqnarray*}
It gives us
\begin{equation*}
\varepsilon^T H_J\varepsilon \geq \frac{c^2}{4} \tilde{d}_J ^2.
\end{equation*}
Therefore, we obtain 
$$
\{\hat{\beta}_J^{ML} \notin  B_J\}  = \{ \tilde{d}_J > \sqrt{\delta_{t-1}}/2\} \subset \left\{\varepsilon^TH_J\varepsilon \geq \frac{c^2 \dt }{16}
\right\}.
$$
From assumptions of Theorem \ref{th2} we know that $mc\lambda ^2 \leq \frac{  c^2 \dt}{16} \:, $
that gives us 
$\{\hat{\beta}_J^{ML} \notin  B_J\} \subset \{\varepsilon^TH_J\varepsilon \geq mc\lambda^2\}$.

On the other hand, $ \varepsilon^TH_J\varepsilon =  \varepsilon^TH_T\varepsilon +  \varepsilon^T (H_J-H_T)\varepsilon$, hence
we obtain for $\tau>0$ and $a \in (0,1)$\\
\[ \{ \varepsilon^TH_J\varepsilon \geq \tau \} \subseteq  \{ \varepsilon^TH_T\varepsilon \geq (1-a)\tau \} 
\cup  \{ \varepsilon^T(H_J-H_T)\varepsilon \geq a\tau \}. \]
Finally 
\begin{eqnarray}
\label{super_f}
&\,& \{T\in {\cal J},\hat{T}\supset T \} \subseteq 
\bigcup_{J\supset T:r(X_J)=|J| \leq \bar{t}} \{ \ell_J -\ell_T \leq  -|J\setminus T|\lambda^2/2 \} \nonumber\\
 &\,&\subseteq {\cal C}_T((1-a)c\lambda^2) \cup \bigcup_{J\supset T:r(X_J)=|J| \leq \bar{t}}  {\cal C}_{J\ominus T}(|J\setminus T|ac\lambda^2).
\end{eqnarray}

\end{proof}

\subsection{Proof of Theorem \ref{th_LM}}
\label{proofTh_LM}

The idea of the proof is similar to the proof of Theorem \ref{th2}. However, here we consider
the linear model  with the subgaussian noise that allows us to  apply simpler and better arguments than in the general case of GLM.
Indeed, while working with the event $\{T\in {\cal J}, \hat T\subset T\}$ we can use the result for the scalar product (Lemma \ref{lemSubG} (i))
instead of the one for quadratic forms (Lemma \ref{lemSubG} (ii)). Besides, when considering $\{T\in {\cal J}, \hat T \supset T\}$ we need to upper bound probability of 
$\{\varepsilon ^TH_J\varepsilon - \varepsilon ^T H_T\varepsilon \geq |J \setminus T|\lambda^2 \}$ instead of 
$\{\varepsilon ^T H_J \varepsilon \geq |J \setminus T|\lambda^2 \}.$ Obviously, probability of the former event is smaller.

As previously, we work with the error decomposition \eqref{error_decom}. Probability of the event 
$\{T\not\in {\cal J}\}$ can be bounded in the same way 
as in the proof of Theorem \ref{th2}, if we put $a_1=\sqrt{a_2}=a$.
However, working with subsets and supersets of $T$ is  different.

We start with subsets of $T.$  For $J\subset T$ we denote  $$\mu_{T\ominus J}=(H_T-H_J)X_T \b* _T$$
and 
 $$\delta_{T\ominus J}=||\mu_{T\ominus J}||^2.$$ 
Notice that 
 $\ell_J + |J|\lambda^2/2 \leq  \ell_T + t\lambda^2/2$ is equivalent to
 \[ \delta_{T\ominus J} +2\varepsilon^{T}\mu_{T\ominus J}+\varepsilon^T (H_T - H_J)\varepsilon\leq (t-|J|)\lambda^2.\]
Whence
 \begin{eqnarray*}
\bigg\{\ell_J + |J|\lambda^2/2 \leq  \ell_T + t\lambda^2/2 \bigg\}&\subseteq& \bigg\{-2\varepsilon^{T}\mu_{T\ominus J}\geq  \delta_{T\ominus J} - k\lambda^2 \bigg\} \\ &\subseteq& \bigg\{-\frac{2\varepsilon^T\mu_{T\ominus J}}{||\mu_{T\ominus J}||}\geq (k \delta)^{1/2}\bigg(1-\frac{\lambda^2}{ \delta}\bigg)\bigg\},
\end{eqnarray*}
where $k=t-|J|$ and $\delta_{T\ominus J} \geq \delta _k\geq k \delta$ by the definiton of $\delta.$
Thus, from Lemma \ref{lemSubG} (i) we have
\begin{eqnarray*}
& & P\bigg(\exists J \subset T:\, \ell_J + |J|\lambda^2/2 \leq  \ell_T + t\lambda^2/2 \bigg)\cr
& & \leq P\bigg(\exists J\subset T:\, -\frac{\varepsilon^T\mu_{T\ominus J}}{||\mu_{T\ominus J}||}\geq 2^{-1}(k \delta)^{1/2}\bigg(1-\frac{\lambda^2}{\delta}\bigg)\bigg)\cr
& &\leq \sum_{k=1}^t {t\choose k} \exp\bigg({-\frac{k}{8\sigma^2} \delta\bigg(1-\frac{\lambda^2}{ \delta}\bigg)^2}\bigg)\cr
& & \leq \sum_{k=1}^t \frac{1}{k!}\exp\bigg({-k\bigg(\frac{1}{8\sigma^2} \delta\bigg(1-\frac{\lambda^2}{ \delta}\bigg)^2-\log t \bigg)\bigg)}\cr
& &\leq  \log(2)^{-1}\exp\bigg( -\frac{1}{8\sigma^2}\delta\bigg(1-\frac{\lambda^2}{ \delta}\bigg)^2+\log t\bigg)\cr
& &\leq \log(2)^{-1}t\exp\bigg(-\frac{a^2\lambda^2}{2\sigma^2}\bigg).
\end{eqnarray*}
For the penultimate inequality we use again the inequality $\exp(c)-1 \leq log(2)^{-1}c$ for $0\leq c \leq log(2)$ and the fact that probability is not greater than $1$. For  the last inequality above we used
\[a^2\lambda^2 \leq \frac{\delta}{4}\bigg(1-\frac{\lambda^2}{\delta}\bigg)^2\]
which is satisfied for
\begin{equation}
\label{quadraticLM}
\lambda^2/\delta \leq f_1(a), 
\end{equation}
where
$$
 f_1(a)=1 +2a^2-\sqrt{(1+2a^2)^2-1}.
$$
It is easy to check that (\ref{quadraticLM}) follows from the assumption $\lambda^2 \leq \delta/(2+2a)^2$ as
\[f_1(a)=\frac{1}{1+2a^2 +\sqrt{(1+2a^2)^2-1}}\geq \frac{1}{(1+\sqrt{2}a)^2}\geq  \frac {1}{(2+2a)^2}.\]


Now, we consider supersets of $T.$ 
As previously we consider only such supersets $J$ that $r(X_J)=|J| \leq \bar t.$ We have $ \ell _T - \ell_J = \varepsilon ^T(H_J - H_T)\varepsilon/2 $.
Using Lemma~\ref{lemSubG}~(iii), as in the proof of Theorem \ref{th2}, we obtain
\begin{eqnarray*}
& & P\big(\exists J\supset T:\, \ell_J + |J|\lambda^2/2 \leq  \ell_T + t\lambda^2/2 \big) \cr
& &  = P\big(\exists J\supset T:\, \varepsilon^T(H_J -H_T )\varepsilon\geq (|J|-t)\lambda^2 \big)\cr
& & \leq \log(2)^{-1}p\exp\bigg(-\frac{a^2\lambda^2}{2\sigma^2}\bigg).
\end{eqnarray*}
Finally, we get the following inequalities 
\[
P(\hat T_{SS}\neq T) \leq \bigg(2+\frac{2}{\log(2)}\bigg)p\exp\bigg(-\frac{a^2\lambda^2}{2\sigma^2}\bigg) \leq 5\exp\bigg(-\frac{a^2(1-a^2)\lambda^2}{2\sigma^2}\bigg).
\]

\qed

\subsection{Proof of Theorem \ref{main_emp}}
\label{proof_convex}

In section \ref{convex} we state Theorem \ref{main_emp}  with simplified constants $K_i.$ Its general form is given below: 

{\it
Fix $a_1,a_2 \in (0,1).$ Assume  that \eqref{strong_c}, \eqref{ZM}, \eqref{UJ} and
\begin{eqnarray}
\label{assump1}
&\quad& \max\left(\frac{4K_1^2\log (2p)}{a_1^2},\frac{32 K_3^2 \log p}{K_4c_2}, \frac{K_3^2t}{a_2c_2}, \frac{16 K_3^2(t+1)}{c_2}\right)  L^2  \leq \lambda^2 \leq  \\
\label{assump}
&\,& \leq  \min \left[
\frac{c_2 \delta}{(1+\sqrt{2a_2})^2} , \frac{c_2 \dt }{4(\bar{t} -t)}
, \frac{(1-a_1)^2 c_1^2 \kappa^2_{a_1} \bmin ^2}{16} \right].
\end{eqnarray}
Then 
\begin{equation*}
P\left( \hat{T}_{SS} \neq T 
\right) \leq 4.5 \exp\left[- \frac{\lambda^2}{L^2}\min\left( \frac{a_1^2 K_2}{4K_1^2} ,
\frac{K_4 c_2}{32K_3^2} , \frac{  a_2 K_4 c_2  }{K_3^2}
\right)\right].
\end{equation*}
}
The main difference between proofs of Theorems \ref{th2} and \ref{main_emp} is  that here we 
investigate properties of the expected loss $\Ex \ell (\cdot)$ instead of the loss $\ell (\cdot).$
It relates to the fact that the former function is more regular. To be more precise, in many parts of the proof we work with expressions of the form
\begin{equation}
\label{dec_emp}
\ell(\tilde{\beta})- \ell(\b*),
\end{equation}
where $\tilde{\beta}$ is a random vector contained in some ball $\tilde{B}.$
Clearly, we can transform \eqref{dec_emp} into 
\begin{equation}
\label{dec_emp1}
\left[ \Ex \ell(\tilde{\beta})- \Ex \ell(\b*)\right] + \left[ \ell(\tilde{\beta})- \ell(\b*) - \Ex \ell(\tilde{\beta})+ \Ex \ell(\b*)\right].
\end{equation}
The first term in \eqref{dec_emp1} can be handled using the regularity of the function $\Ex \ell (\cdot)$ given in assumption \eqref{strong_c}, while to work with the latter we apply assumptions \eqref{ZM} or \eqref{UJ}. 

As previously, we work with the error decomposition \eqref{error_decom} and start with bounding probability of $\{T \notin \mathcal{J}\}.$ Take $a_1 \in (0,1)$ and define $\Ms = \frac{4 \lambda}{(1-a_1) c_1 \cc}\:.$ From \eqref{assump} we have 
$\Ms \leq \bmin.$ Consider the following event
\begin{equation}
\label{omega}
\Omega =\left\{ Z(\Ms) \leq \frac{a_1 \lambda \Ms}{2}
\right\}\:,
\end{equation}
which has probability  not less than 
\begin{equation}
\label{prob}
1- \exp\left(- \frac{a_1^2K_2 \lambda^2}{4 K_1^2  L^2}
\right).
\end{equation}
Indeed, if we take $z= \frac{a_1 \lambda }{2K_1L \sqrt{ \log(2p)}}$ and $r=r^*$ in \eqref{ZM}, then using again \eqref{assump} we obtain
$
z \geq 1
$
and \eqref{prob} follows.

Similarly to the proof of \citet[Theorem 6.4]{BuhlmannGeer11}, we can show that the event 
 \eqref{omega} implies that 
\begin{equation}
\label{l1}
|\hbeta - \b*|_1 \leq \Ms /2. 
\end{equation}
From \eqref{l1} and \eqref{assump} we can obtain separability of the Lasso. Namely,  for each
$j \in T, k \notin T$ that 
$$
|\hbeta _j| \geq |\b* _j|-|\hbeta _j - \b* _j| \geq \bmin - |\hbeta  - \b* |_1 
\geq \Ms/2 \geq |\hbeta  - \b* |_1 \geq |\hbeta _k - \b* _k| = |\hbeta _k|.
$$

Now we consider probability that GIC chooses a submodel of $T$ in the second step of the SS algorithm. Using the definition \eqref{calE}
we obtain
$$
\{T\in {\cal J},\hat{T} \subset T\} \subset \bigcup_{k=1}^{t-1} {\cal E}_k(k\lambda^2/2).
$$
Fix $k$ and $J$ such that $ J\subset T, |T\setminus J|=k.$ We take arbitrary $\beta_T \in \partial B_{2,T}\left( \sqrt{\delta_k}\right)$ and recall that $\mathring{\ell} (\beta)=\ell(\beta) - \ell(\mathring{\beta}).$ It is clear that $\dt \geq \delta_k,$ so we can use \eqref{strong_c} for $\beta_T$ and obtain
\begin{multline*}
\ell(\beta_T) - \ell(\b*) = \Ex \ml(\beta_T)  +[\ml(\beta_T) -\Ex \ml (\beta_T)]
\\ \geq  \frac{c_2}{2} (\b* - \beta_T)^T X_T^TX_T(\b* - \beta_T) - U_T\left(\sqrt{\delta_k}
\right)
\geq  \frac{c_2}{2}\delta_k - U_T\left(\sqrt{\delta_k}\right).
\end{multline*}
Since the last expression does not depend on $\beta_T$ we have
\begin{equation*}
\min_{\beta_T \in \partial B_{2,T}\left( \sqrt{\delta_k}\right)} \ml (\beta_T) \geq 
\frac{c_2}{2}\delta_k - U_T\left(\sqrt{\delta_k}\right).
\end{equation*}
Proceeding as in the proof of Lemma \ref{aux_lemma} we obtain
$$
{\cal E}_k (k\lambda^2/2) \subset \left\{ k\lambda^2/2 \geq \frac{c_2}{2}\delta_k - U_T\left(\sqrt{\delta_k}\right) \right\}.
$$
If we take $b_k = \frac{k \lambda^2 }{c_2 \delta_k}\:,$ then 
$$
{\cal E}_k (k\lambda^2/2) \subset \left\{ U_T\left(\sqrt{\delta_k}\right) \geq \frac{c_2\delta_k}{2} (1-b_k)  \right\}.
$$
From \eqref{assump} we have (as in the proof of Lemma \ref{lemSub2})
\begin{equation}
\label{ssub2}
a_2b_k \leq (1-b_k)^2/4.
\end{equation}
Take
$$z= \frac{(1-b_k)c_2 \sqrt{\delta_k}}{2K_3 L \sqrt{t}}\:,$$
which is not smaller than one by \eqref{ssub2} and \eqref{assump}.
Thus,  using \eqref{UJ} with $r=\sqrt{\delta_k}$ and \eqref{ssub2} we have
$$
P\left(U_T\left(\sqrt{\delta_k}\right) \geq \frac{c_2\delta_k}{2} (1-b_k) 
\right) \leq \exp\left(- \frac{K_4 a_2k  c_2  \lambda^2}{K_3 ^2 L^2}
\right).
$$
Therefore,
\begin{eqnarray*}
&\,&P(T\in {\cal J},\hat{T} \subset T)\leq
\sum_{k=1}^{t-1}\exp\left(-k       \frac{K_4a_2 c_2  \lambda^2}{K_3^2L^2}        \right)\\
&\leq& \left[\exp\left(  \frac{K_4a_2 c_2  \lambda^2}{K_3^2L^2}   \right) -1\right]^{-1}\leq  \frac{\exp(K_4t)}{\exp(K_4t)-1} \exp\left(-  \frac{K_4a_2 c_2  \lambda^2}{K_3^2L^2}   \right),
\end{eqnarray*}
because $\lambda^2\geq \frac{K_3^2tL^2}{a_2c_2}$ by \eqref{assump1}.

Next, we consider choosing a supermodel of $T$ by GIC. 
Fix the set $J\supset T, r(X_J)=|J|\leq \bar{t} $ and denote $m=|J\setminus T|.$ Therefore, we have
\begin{eqnarray*}
&\,&\{-m \lambda^2/2 \geq  \ell(\hbj)- \ell(\hat{\beta}_T)  \} \subset \{-m \lambda^2/2 \geq  \ml(\hbj)  \}\\
&=& \{-m \lambda^2/2 \geq \Ex \ml (\hbj) + [\ml (\hbj) -\Ex \ml (\hbj)]  \}\\
&\subset& \{-m \lambda^2/2 \geq \ml (\hbj) -\Ex \ml (\hbj)  \}=:D_J,
\end{eqnarray*}
because $\ell(\hat{\beta}_T) \leq \ell(\b*)$ and $\Ex \ell(\hbj) \geq \Ex \ell(\b*).$ We will prove that 
$D_J$ is contained in 
\begin{equation}
\label{UJ1}
\{U_J(r) >m \lambda^2/2\}
\end{equation}
for $r=2 \lambda \sqrt{\frac{m}{c_2}}\:.$ 
The event $D_J$ can be decomposed as
\begin{equation}
\label{DJ}
D_J= \{D_J \cap [\hbj \in B_{2,J}(r)]\} \cup \{D_J \cap [\hbj \notin B_{2,J}(r)]\}.
\end{equation}
It is clear that the first event on the right-hand side of \eqref{DJ} is contained 
in \eqref{UJ1} and the second one is contained in $\{\hbj \notin B_{2,J}(r)\}.$ Now we prove that 
$\{\hbj \notin B_{2,J}(r)\}$ is also contained in \eqref{UJ1}. Our argumentation is similar to the proof of Lemma \ref{lemSup}, but here we take $ u=\frac{r}{r+d_J}. $
Therefore, we obtain 
\begin{equation}
\label{notball1}
\Ex \ml(\tbj) = \ml (\tbj) - \ml (\tbj) +\Ex \ml(\tbj) \leq - \ml (\tbj) +\Ex \ml(\tbj) \leq U_J(r).
\end{equation}
Moreover,  the following bound
\begin{equation}
\label{notball2}
\Ex \ell(\tbj) - \Ex \ell(\b*) \geq \frac{c_2}{2} \tilde{d}_J ^2, 
\end{equation}
 is implied by \eqref{strong_c}, because
 $\tbj \in B_{2,J}(r) \subset B_J.$ This inclusion comes from \eqref{assump}, because
$$
\dt \geq \frac{4 \lambda^2 (\bar{t}-t)}{c_2} \geq \frac{4 \lambda^2 m}{c_2}.
$$
Taking \eqref{notball1} and \eqref{notball2} we have
$$
\{\hbj \notin B_{2,J}(r)\} = \{d_J^2>r^2\}= \left\{\tilde{d}_J^2>\frac{r^2}{4}\right\} \subset
\left\{U_J(r) >\frac{c_2 r^2}{8}\right\}
$$
and $\frac{c_2 r^2}{8}=m \lambda^2 /2$ by the choice of $r.$ Therefore, we have just proved that 
$D_J$ is contained in \eqref{UJ1}. To finish the proof we need sharp enough upper bound of \eqref{UJ1}. 
It can be obtained using \eqref{UJ} with  $z=\frac{ \lambda \sqrt{mc_2}}{4K_3 L \sqrt{|J|}}$ that gives us 
$$
P(D_J) \leq P(U_J(r) >m \lambda^2/2) \leq \exp\left(- \frac{K_4 m \lambda ^2 c_2}{16 K_3^2 L^2}
\right).
$$
Notice that $z\geq 1, $ because using \eqref{assump}
$$
\lambda^2 \geq \frac{16 K_3^2 L^2(t+1) }{c_2} \geq \frac{16 K_3^2 L^2 |J|}{c_2 m}\:.
$$
Thus, we obtain the following bound on probability of choosing a supermodel
\begin{eqnarray*}
&\,&P(T\in {\cal J},\hat{T} \supset T) \leq \sum_{m=1}^{p-t} {{p-t}\choose{m}}\exp\left(
-m\frac{ K_4 \lambda ^2 c_2}{ 16 K_3^2L^2}\right)\\
&\leq& \sum_{m=1}^{p-t} \frac{1}{m!} \exp \left[-m\left(\frac{K_4 \lambda ^2 c_2}{16 K_3^2 L^2} -\log (p-t)\right)\right]\\
&\leq& (\log 2)^{-1} \exp\left(-\frac{ K_4 \lambda ^2 c_2}{32 K_3^2 L^2}
\right),
\end{eqnarray*}
where we use the fact that $\frac{K_4 \lambda ^2 c_2}{ 16K_3^2 L^2} \geq 2\log (p-t)$
from \eqref{assump} and two inequalities $ {{p-t}\choose{m}} \leq \frac{(p-t)^m}{m!}$ and 
$\exp(b)-1 \leq (\log 2)^{-1} b$ for $b \in (0,\log 2).$ \qed

\bibliographystyle{apalike}

\bibliography{SS_arxiv}

\end{document}